\documentclass[11pt]{amsart}

\usepackage{amsmath,amsthm,amsfonts,amscd,amssymb,eucal,latexsym}
\usepackage[margin=1.25in,marginparwidth=2.5cm]{geometry}

\usepackage[numbers, sort&compress]{natbib}
\usepackage{graphicx}
\usepackage{vmargin, enumerate}
\usepackage{setspace}
\usepackage{paralist}
\usepackage{caption,subcaption}
\usepackage{wrapfig}
\usepackage[pagebackref=false]{hyperref}

\setlength{\parskip}{0.5\baselineskip}

\theoremstyle{plain}
\newtheorem{theorem}{Theorem}[section]
\newtheorem{corollary}[theorem]{Corollary}
\newtheorem{fact}[theorem]{Fact}
\newtheorem{lemma}[theorem]{Lemma}
\newtheorem{proposition}[theorem]{Proposition}
\theoremstyle{definition}

\newcommand{\prob}{\mathbb{P}}

\newcommand{\bG}{\ensuremath{\mathbf{G}}}
\newcommand{\rG}{\ensuremath{\mathrm{G}}}
\newcommand{\rH}{\ensuremath{\mathrm{H}}}
\newcommand{\bQ}{\ensuremath{\mathbf{Q}}}
\newcommand{\rQ}{\ensuremath{\mathrm{Q}}}
\newcommand{\bT}{\ensuremath{\mathbf{T}}}
\newcommand{\rT}{\ensuremath{\mathrm{T}}}

\newcommand{\cT}{\ensuremath{\mathcal{T}}}
\newcommand{\cQ}{\ensuremath{\mathcal{Q}}}
\newcommand{\cS}{\ensuremath{S}}

\title{Random infinite squarings of rectangles}
\author{Louigi Addario-Berry}
\address{Department of Mathematics and Statistics, McGill University, 805 Sherbrooke Street West, 
		Montr\'eal, Qu\'ebec, H3A 2K6, Canada}
\email{louigi.addario@mcgill.ca}
\urladdr{http://www.math.mcgill.ca/~louigi/}
\thanks{L.A.-B. was supported by an NSERC Discovery Grant and an FQRNT Nouveaux Chercheurs Grant during this research.} 

\author{Nicholas Leavitt}
\address{Department of Mathematics and Statistics, McGill University, 805 Sherbrooke Street West, 
		Montr\'eal, Qu\'ebec, H3A 2K6, Canada}
\email{nicholas.leavitt@mail.mcgill.ca}

\date{May 12, 2014}
\begin{document}

\begin{abstract}
A recent preprint \cite{ab14hex} introduced a growth procedure for planar maps, whose almost sure limit is ``the uniform infinite $3$-connected planar map". 
A classical construction of Brooks, Smith, Stone and Tutte \cite{brooks40dissection} associates a squaring of a rectangle (i.e. a tiling of a rectangle by squares) to any to finite, edge-rooted planar map with non-separating root edge. We use this construction together with the map growth procedure to define a growing sequence of squarings of rectangles. We prove the sequence of squarings converges to an almost sure limit: a random infinite squaring of a finite rectangle. This provides a canonical planar embedding of the uniform infinite $3$-connected planar map. We also show that the limiting random squaring almost surely has a unique point of accumulation. 
\end{abstract}

\maketitle

\begin{figure}[ht]
\begin{subfigure}[b]{0.48\textwidth}
\includegraphics[width=\linewidth]{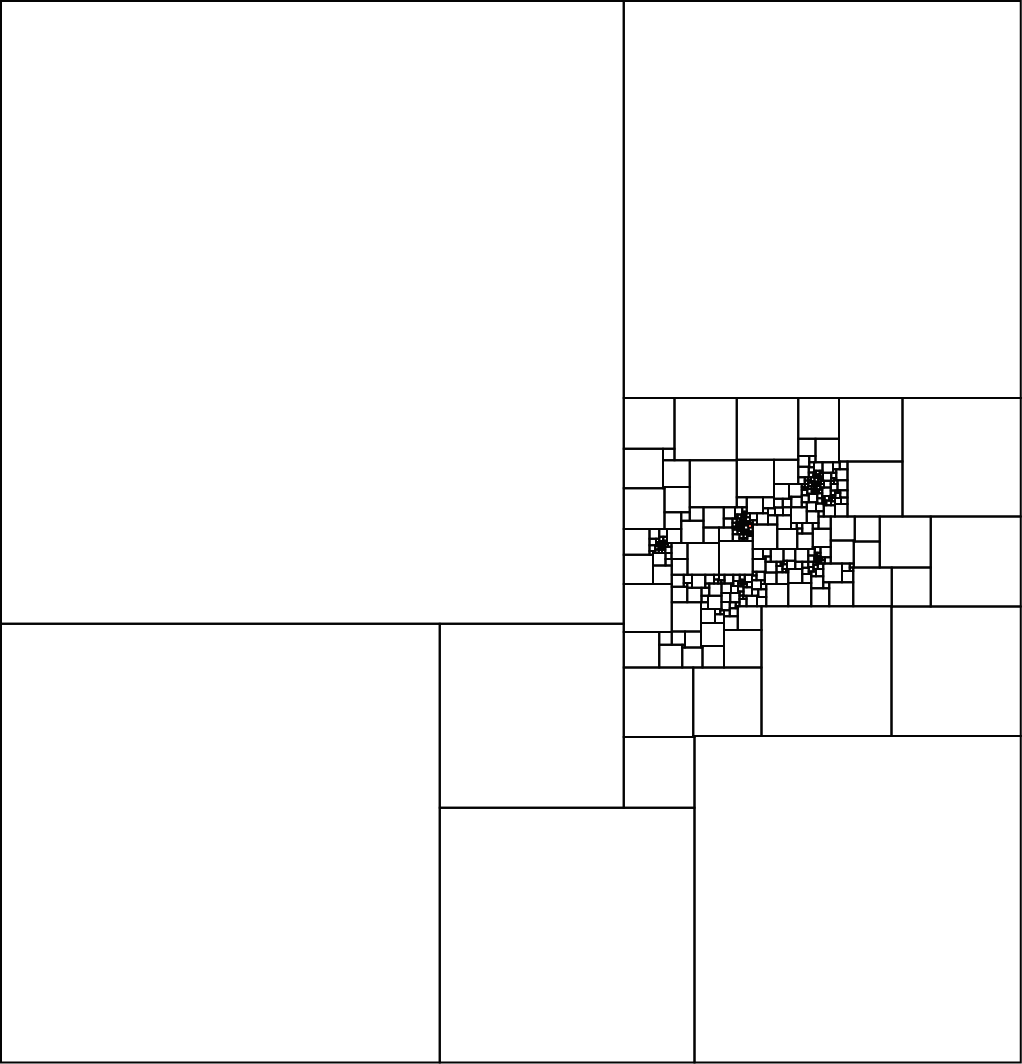}
\caption{\small A random squaring of a rectangle with 6345 squares. }
\end{subfigure}
\quad
\begin{subfigure}[b]{0.48\textwidth}
\includegraphics[width=\linewidth]{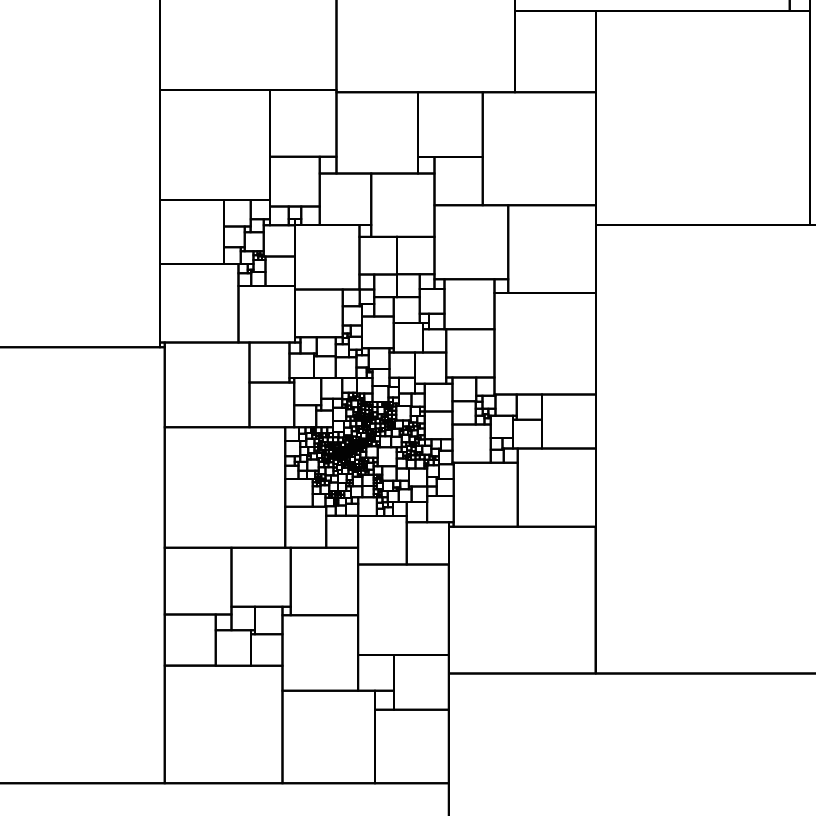}
\caption{\small A magnification of a small region within the same squaring.}
\end{subfigure}
\end{figure}

\section{\large {\bf Introduction}}
In the proceedings of the 2006 ICM, Oded Schramm \cite{schramm2007icm} suggested the problem of determining the Gromov-Hausdorff (GH) distributional limit of uniform triangulations of the sphere and noted the connection, predicted in the physics literature, between such a limit and what he called ``the enigmatic KPZ formula \ldots relating exponents in quantum gravity to the corresponding exponents in plane geometry". Since that time, research in the area has exploded. Le Gall's \cite{legall2013brownianmap} proof that uniform triangulations have the metric space called the {\em Brownian map} as their GH limit\footnote{Independently and roughly simultaneously, Miermont \cite{miermont2013brownianmap} proved a similar result for uniform quadrangulations.}, and Duplantier and Sheffield's \cite{duplantier2011liouville} rigorous formulation and proof of the KPZ scaling relation for ``Liouville quantum gravity" (LQG) are two recent highlights within the subject. 

It has proved challenging to establish a direct connection between the Brownian map and LQG. \footnote{We should note that the very recent preprint of Miller and Sheffield \cite{miller14qle} announces a new project, whose ``ultimate aim ... is to rigorously construct the metric space structure of the corresponding [$\gamma^2=8/3$] LQG surface and to show that the random metric space obtained this way agrees in law with a form of the Brownian map."}
A major obstacle is that although the Brownian map is known to have the {\em topology} of the $2$-sphere $\mathbb{S}^2$ \cite{legall2007topological,miermont2008sphericity}, GH convergence alone is not strong enough to yield a canonical random metric (distance function) $d$ on $\mathbb{S}^2$ so that $(\mathbb{S}^2,d)$ has the law of the Brownian map. In seeking such a metric $d$, it is natural to consider discrete models of maps that at least have natural canonical embeddings in $\mathbb{S}^2$. Such considerations led Le Gall \cite{legall2013brownianmap,legall14rg} to suggest the study of uniform $3$-connected triangulations, for which one may use the associated circle packings to define canonical embeddings.\footnote{The circle-packing theorem states that such an embedding is unique up to conformal automorphisms.} 

In this paper, we instead study uniform $3$-connected planar maps (or equivalently, by Whitney's theorem, planar graphs) without constraints on face degrees.\footnote{A graph $G$ is $3$-connected if the removal of any two vertices $v,v'$ leaves $G$ connected. Similarly, a graph $G$ is $2$-connected if it has no {\em cut-vertex}, i.e., no vertex $v$ whose removal disconnects $G$.} A classic result of Brooks', Smith, Stone and Tutte \cite{brooks40dissection} canonically associates a {\em squaring of a rectangle} to any finite edge-rooted planar map $\mathrm{G}=(G,st)$ whose root edge is not a cut-edge; furthermore, any squaring of a rectangle may be so obtained. We write $\cS(\mathrm{G})$ for this squaring, which is the union of a collection of line segments in $\mathbb{R}^2$. Here is the main result of the current paper. 
\begin{theorem}\label{thm:main}
There exists an explicitly defined sequence $(S_n,n \ge 1)$ of random squarings of rectangles, $S_n$ being composed of $n$ squares, which converges almost surely for the Hausdorff distance to a compact limit $S_{\infty}$. Furthermore, $S_{\infty}$ a.s.\ has exactly one point of accumulation and has the law of $\cS(\mathbf{G}_{\infty})$, where $\mathbf{G}_{\infty}$ is ``the uniform infinite $3$-connected planar map".
\end{theorem}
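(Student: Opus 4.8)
The plan is to take $\bG_n$ to be the $n$-th map produced by the growth procedure of \cite{ab14hex}, so that $\bG_n \to \bG_\infty$ almost surely in the local (Benjamini--Schramm) topology, each $\bG_n$ is $3$-connected with non-separating root edge $st$, and $\bG_{n+1}$ is obtained from $\bG_n$ by a single local operation that introduces exactly one new edge. Setting $S_n = \cS(\bG_n)$, normalized so that every squaring lives in a rectangle of fixed height $1$ (equivalently, imposing a unit potential difference across $st$ in the associated electrical network), produces the required sequence of $n$-square squarings. The three things to verify are then: almost sure Hausdorff convergence of $S_n$; the identification of the limit with $\cS(\bG_\infty)$; and the uniqueness of the accumulation point.

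The heart of the matter is the electrical interpretation of the BSST construction. Viewing $\bG_n$ as a resistor network with a unit battery across $st$, each edge $e$ carries a current $i^{(n)}_e$, and the square $Q^{(n)}_e \in S_n$ associated to $e$ has side length $|i^{(n)}_e|$ and position governed by the harmonic potential at the endpoints of $e$ together with the conjugate (dual) potential. The first key step is to prove edgewise convergence of this electrical data: for each edge $e$ present from some stage on, $i^{(n)}_e$ and the primal and dual potentials at its endpoints converge almost surely as $n \to \infty$. I would obtain this from the stability of currents under increasing network limits---Rayleigh monotonicity and the convergence of effective resistances along the exhaustion---together with recurrence of $\bG_\infty$, which forces uniqueness of currents (free equals wired) and hence convergence of the finite currents to those of a well-defined limit network. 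Since $Q^{(n)}_e$ is a continuous function of this local data, it follows that $Q^{(n)}_e \to Q^{(\infty)}_e$ for each fixed $e$.

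To upgrade edgewise convergence to almost sure Hausdorff convergence of the boundary sets $S_n$, I would establish a uniform tail estimate: for every $\varepsilon > 0$, all but finitely many squares have diameter at most $\varepsilon$, and the squares introduced after a large stage are confined to an $\varepsilon$-neighbourhood of a single region. Conservation of current (the side lengths met by any horizontal cut sum to the total width, which converges) bounds the number of squares of side at least $\varepsilon$ uniformly in $n$, so the large squares are carried by finitely many early edges whose geometry has already stabilized, while the remaining squares contribute only a vanishing perturbation. Combining the edgewise limit with this tail control both identifies the limit, $S_\infty := \lim_n S_n = \overline{\bigcup_e Q^{(\infty)}_e} = \cS(\bG_\infty)$, and shows that $S_\infty$ is compact.

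For the unique accumulation point, I would argue that the growth procedure inserts all new edges in the vicinity of a single distinguished location, which corresponds under BSST to a single point $p \in S_\infty$: the newly created squares shrink to zero and cluster at $p$. Uniqueness then follows by showing that away from $p$ the family $\{Q^{(\infty)}_e\}$ is locally finite---any other candidate accumulation point would force infinitely many squares of comparable size into a bounded region, contradicting the current-conservation bound above---so $p$ is the only point every neighbourhood of which meets infinitely many squares. I expect the main obstacle to be the localization estimate of the third step: because inserting an edge perturbs \emph{every} current in the network globally, the difficulty is to quantify that this perturbation is summably small and spatially concentrated. Establishing this quantitative stability of currents under the specific growth dynamics---rather than mere edgewise convergence---is the crux, and it is precisely what makes both the Hausdorff convergence and the single accumulation point go through.
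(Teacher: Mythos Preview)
Your approach to Hausdorff convergence is essentially the paper's: recurrence of $\bG_\infty$ forces convergence of the harmonic potentials (and hence of currents and square positions) along the local weak exhaustion, and a tightness argument based on total area/current upgrades edgewise convergence to Hausdorff convergence. The paper packages this as a general statement (Proposition~\ref{prop:conv_squaring}) about recurrent locally finite maps, using the energy formulation of effective resistance to show the limiting squares tile the full rectangle; your ``tail estimate'' plays the same role. Two minor corrections: the maps $\bG_n$ are \emph{not} $3$-connected for every $n$ (only with asymptotic probability $2^8/3^6$, and $3$-connectedness of $\bG_\infty$ is a consequence of the limit theorem), and the passage from $\bG_n$ to $\bG_{n+1}$ is not a local operation in $\bG_n$, since the closure map from trees to quadrangulations is highly non-local.

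The real gap is in your argument for a unique accumulation point. The assertion that ``the growth procedure inserts all new edges in the vicinity of a single distinguished location, which corresponds under BSST to a single point $p\in S_\infty$'' is precisely what has to be proved: the changes occur ``at infinity'' in the graph metric, and the question of whether graph-infinity corresponds to a single point of the squaring \emph{is} the question of whether $S_\infty$ has one accumulation point. So this step is circular. Your fallback---that a second accumulation point would force infinitely many squares of \emph{comparable} size into a bounded region, contradicting current conservation---is incorrect: accumulation points are exactly where infinitely many squares of \emph{vanishing} size pile up, and no area or current bound rules that out at two places simultaneously.

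The paper handles this by an entirely different route: it invokes a theorem of He and Schramm (Theorem~\ref{thm:heschramm}) stating that a well-separated fat packing whose contacts graph is locally finite, one-ended, and vertex-parabolic has a single accumulation point. The work is then to verify these properties for the contacts graph $R(S_\infty)$. One-endedness is inherited from one-endedness of $\bQ_\infty$ (hence of the tree $\bT_\infty$) via the primal/facial line correspondence. Vertex-parabolicity is obtained by embedding $R(S_\infty)$ in the square of the \emph{derived map} $D_\infty$ of $G_\infty$, showing $D_\infty$ is a.s.\ recurrent (by re-rooting so that Gurel-Gurevich--Nachmias applies), and transferring edge-parabolicity of $D_\infty$ to vertex-parabolicity of $R(S_\infty)$. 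None of this machinery appears in your proposal, and without it (or a genuine substitute) the uniqueness of the accumulation point is unproved.
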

For a given graph $\mathrm{G}=(G,st)$, the calculation of $\cS(\mathrm{G})$ is accomplished by viewing $\mathrm{G}$ as an electrical network with potential difference one between the ends of the root edge, finding the potentials at the vertices of $G$, then applying a simple geometric construction which we shortly describe. This is all accomplished by solving the equations given by Kirchoff's laws. Furthermore, the resulting geometric representation is closely linked to the properties of simple random walk on $G$. The simplicity and explicit nature of the construction, the connection with electrical networks, and the definition of $S_{\infty}$ as an almost sure (rather than distributional) limit together lead us to view Theorem~\ref{thm:main} as a promising tool for connecting random maps with LQG. Several precise questions, some in this vein, appear in Section~\ref{sec:quests}. 

We conclude the introduction with a brief sketch of what follows. Section~\ref{sec:preliminaries} primarily introduces the objects of study and describes existing results of which we make use. In particular, in Section~\ref{ssec:squaring} we describe the Brooks-Smith-Stone-Tutte construction of squarings from planar maps. 
In Section~\ref{ssec:bijec}, we construct the sequence $(S_n, 1 \le n \le \infty)=(\cS(\mathbf{G}_n),1 \le n \le \infty)$ from an a.s.\ convergent sequence $(\mathbf{G}_n,1 \le n \le \infty)$ of random maps introduced in a recent preprint of the first author \cite{ab14hex}; we then establish the Hausdorff convergence of $S_n$ to $S_{\infty}$ in Section~\ref{sec:convergence}. In Section~\ref{sec:onepoint} we analyze the contacts graph of the limit $S_{\infty}$, showing that it is a.s.\ vertex-parabolic and one-ended. Theorem~\ref{thm:main} is then easily deduced in Section~\ref{sec:proof}. Finally, Section~\ref{sec:quests} contains questions and conjectures.

\section{\large {\bf Preliminaries: graph limits, squarings, bijections, and recurrence.}} \label{sec:preliminaries}

\subsection{Terminology} 
For the remainder of the paper, all graphs are assumed to be simple and have finite degrees unless otherwise indicated. For any graph $G$, write $v(G)$ and $e(G)$ for the vertices and edges of $G$, respectively, so $G=(v(G),e(G))$. We write $\deg_G(v)$ for the degree of $v \in v(G)$. For $r > 0$, write $B_G(v,r)$ for the subgraph of $G$ induced by vertices at graph distance at most $r$ from $v$. Given $A \subset e(G)$ we write $G-A$ for the graph $(v(G),e(G)\setminus A)$, and given $U \subset v(G)$ we write $G-U$ for the subgraph of $G$ induced by $v(G)\setminus U$. Finally, a {\em rooted graph} is a pair $(G,v)$ where $G$ is a graph and $v \in v(G)$. 

\subsection{Distributional limits of graphs} \label{ssec:distlim}%
Given rooted graphs $\rG=(G,\rho)$ and $\rG'=(G',\rho')$, we say the {\em distance} between $\rG$ and $\rG'$ is $1/(r+1)$, where $r$ is the greatest value for which $(B_G(\rho,r),\rho)$ and $(B_{G'}(\rho',r),\rho')$ are isomorphic (as rooted graphs). The distance between two graphs is zero precisely if they are isomorphic, so it is straightforward to show this distance defines a metric on the set of isomorphism classes of locally finite rooted graphs.  Convergence for this metric is often called {\em local weak} or {\em Benjamini-Schramm} convergence of graphs \cite{benjamini01recurrence,aldous04objective}. A sequence $((G_n,\rho_n),n \ge 1)$ converges in the local weak sense precisely if there exists a graph $(G_\infty,\rho_{\infty})$ such that for any $r > 0$ and all $n$ sufficiently large, $(B_{G_n}(\rho_n,r),\rho_n)$ and $(B_{G_{\infty}}(\rho_{\infty},r),\rho_{\infty})$ are isomorphic. A sequence $((G_n,\rho_n),n \ge 1)$ of {\em random} rooted graphs converges in {\em distribution} in the local weak sense if for every finite rooted graph $(G,\rho)$, $\prob(B_{G_n}(\rho_n,r)=(G,\rho))$ converges as $n \to \infty$. 

Recall that simple random walk on a locally finite graph $G$ is the Markov chain $(X_n,n \ge 0)$ on $v(G)$ with transition probabilities $p_{uv}=1/\deg(u)$ for $\{u,v\} \in e(G)$, and $p_{uv}=0$ otherwise. If $G$ is finite, then the simple random walk has stationary measure $\pi$ given by $\pi(v) = \deg_G(v)/2|e(G)|$ for all $v \in v(G)$. 
The graph $G$ is \emph{recurrent} if for all $v \in v(G)$, $\prob(\exists n>0:X_n = v|X_0=v) = 1$. Equivalently, $G$ is recurrent if and only if when edges are viewed as unit resistors, the electrical resistance from any node to infinity is infinite. 

We next state a beautiful theorem of Gurel-Gurevich and Nachmias \cite{gg13recurrence}, which we use below. 
A random infinite graph $(G,\rho)$ is a {\em distributional limit of finite planar graphs} if there exists a sequence $(G_n,\rho_n)$ of random finite planar graphs such that (a) for each $n$, conditional on $G_n$, the root $\rho_n$ is distributed according to the stationary measure on $v(G_n)$,\footnote{See \cite{benjamini01recurrence} for a more detailed discussion of this condition.} and (b) $(G_n,\rho_n)$ converges in distribution in the local weak sense to $(G,\rho)$. Finally, say a random variable $X$ has {\em exponential tail} if there exists $c > 0$ such that $\prob(X>t) \le e^{-c t}$ for all sufficiently large $t$. 
\begin{theorem}[\cite{gg13recurrence}, Theorem 1.1]\label{thm:limrecur}
Let $(U,\rho)$ be a distributional limit of finite planar graphs such that the degree of $\rho$ has exponential tail. Then $U$ is almost surely recurrent.
\end{theorem}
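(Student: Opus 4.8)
The plan is to deduce recurrence from the geometry of circle packings, following the strategy that Benjamini and Schramm introduced in the bounded-degree setting, with the exponential-tail hypothesis doing the work needed to remove the degree bound. The organizing principle is the circle-packing type dichotomy of He and Schramm: a one-ended simple planar triangulation admits a circle packing filling either the whole plane $\mathbb{C}$ (the CP-parabolic case) or the open unit disk $\mathbb{D}$ (the CP-hyperbolic case), and, in the bounded-degree theory, parabolicity is equivalent to recurrence of simple random walk. Thus I would aim to reduce the theorem to the geometric statement that the packing associated to $U$ is parabolic, i.e.\ that it cannot accumulate on the boundary of a disk. A caveat to flag at the outset is that this type theory is classically stated for bounded degree, so part of the task is to extend the relevant direction (and the parabolic-implies-recurrent equivalence) to the unbounded-degree regime allowed here, precisely where the tail hypothesis must intervene.

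First I would perform two reductions. Because recurrence is monotone under edge deletion—removing edges only increases effective resistances—if I augment $U$ to a triangulation $T$ by adding edges and show $T$ is recurrent, then $U$, being a spanning subgraph of $T$, has even larger resistance to infinity and is therefore also recurrent. For this to be legitimate the augmentation must be carried out by a measurable, automorphism-equivariant rule applied to $(U,\rho)$, so that $(T,\rho)$ remains a distributional limit of finite planar graphs (equivalently, a unimodular/reversible random rooted graph), and so that control on the degree tails is retained: here the first genuine use of the hypothesis is to verify that the exponential tail on $\deg_U(\rho)$ still yields at least $\expe[\deg_T(\rho)] < \infty$ after triangulation. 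I would also reduce to the one-ended case, since a unimodular planar graph of finite expected degree decomposes in a controlled way and the relevant recurrent component is one-ended.

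The heart of the argument is to rule out the CP-hyperbolic case by a mass-transport argument built on the circle packing of $T$ in $\mathbb{D}$. Writing $r(v)$ for the radius of the circle assigned to $v$, I would define a transport function sending mass out of each vertex according to a quantity assembled from the radii and the positions of $v$ and its neighbours, and then apply the mass-transport principle (a consequence of the stationarity condition~(a) in the hypothesis) to equate expected outflow and inflow at the root $\rho$. The geometric input is that a packing of $\mathbb{D}$ must have radii decaying towards $\partial\mathbb{D}$, together with the disjointness bound $\sum_v \pi r(v)^2 \le \pi$; a suitably chosen transport then forces a contradiction between an a.s.\ infinite local sum and the finiteness of its expectation guaranteed by the first-moment bound. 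Excluding hyperbolicity leaves parabolicity, and hence, via the (extended) He--Schramm equivalence, recurrence.

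I expect the main obstacle to be exactly the step the bounded-degree theory takes for granted: quantitative control of the ratios of adjacent circle radii. The classical Ring Lemma, which bounds the ratio of radii of tangent circles in terms of the number of circles surrounding a given one, degenerates when degrees are unbounded, since one high-degree vertex may be flanked by circles of vastly different sizes. The exponential-tail hypothesis must be exploited to show that such high-degree vertices are rare enough, and their geometric influence sufficiently localized, that the expectations entering the mass-transport estimate stay finite and the contradiction survives. Establishing this control \emph{uniformly along the packing}, rather than merely at the root, and pushing it through the extension of the circle-packing type theory to the unbounded-degree setting, is the technically delicate core where I would anticipate spending the bulk of the effort.
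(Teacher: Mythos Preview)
The paper does not prove this statement; Theorem~\ref{thm:limrecur} is simply quoted from \cite{gg13recurrence} and used as a black box (it is invoked in the proofs of Theorem~\ref{thm:ginfinity_recur} and Lemma~\ref{lem:derived_recurrent}). There is therefore no proof in the paper to compare your proposal against.

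As a separate remark on the proposal itself: your outline is closer in spirit to the original Benjamini--Schramm argument and to the circle-packing type dichotomy of He--Schramm than to what Gurel-Gurevich and Nachmias actually do. Their proof does not proceed by circle-packing the infinite limit $U$, invoking the parabolic/hyperbolic alternative, and then ruling out hyperbolicity by mass transport. Instead they work with the \emph{finite} approximants $G_n$: each $G_n$ is circle-packed in the plane, the Benjamini--Schramm ``magic lemma'' is used to locate a good center, and then a direct effective-resistance estimate is proved---showing that the resistance from the root to the boundary of graph-balls of radius $r$ tends to infinity, uniformly in $n$. The exponential-tail hypothesis enters not through a modified Ring Lemma but through a counting argument controlling, at each dyadic scale, how many circles of anomalously small radius can occur. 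Your identification of ``uniform control of adjacent radius ratios'' as the crux is therefore somewhat off target; the actual obstacle is a multi-scale resistance bound, and it is overcome by a rather different mechanism than the one you describe.
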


\subsection{Squarings of rectangles} \label{ssec:squaring}
An \emph{edge-rooted map} is a pair $(G,{st})$, where $G$ is a connected planar graph, properly embedded in $\mathbb{R}^2$ such that the distinguished directed edge $st$ lies in the unique unbounded face of $G-\{s,t\}$. We write $(G^*,s^*t^*)$ for the planar dual of $(G,{st})$, with the convention that the tail $s^*$ of $s^*t^*$ is in the face lying to the right of ${st}$. For $e \in e(G)$ write $e^*$ for the dual edge to $e$ in $(G^*,s^*t^*)$. A map is {\em locally finite} if all vertices and faces have bounded degree. Throughout this section, $\mathrm{G}=(G,{st})$ denotes a fixed, locally finite edge-rooted map such that $G-\{s,t\}$ is connected (in this case $G^*-\{s^*,t^*\}$ is also connected).

A \emph{squaring of a rectangle} is a closed set $S \subset \mathbb{R}^2$ such that all bounded components of $\mathbb{R}^2\setminus S$ are (open) squares, and the closure of the union of all such bounded components is a compact rectangle. The {\em squares} of $S$ are the closures of the connected components of $\mathbb{R}^2\setminus S$ (note that here we include the unbounded component). It is easily seen that $S$ is recoverable from its set of squares.

We now define the squaring $\cS(\mathrm{G})$ associated to an edge-rooted map $\rG=(G,{st})$; this construction was discovered by Brooks, Smith, Stone, and Tutte \cite{brooks40dissection}. (Another, rather different way to define squarings using maps was later described by Schramm \cite{schramm93square}.) The definition is illustrated in Figure~\ref{fig:squaringwithcoords}. 
First associate an electrical network with $\rG$ as follows. Cut ${st}$, connect a 1 volt power supply to $s$, ground at $t$, and let edges act as unit resistors. Write $\lambda(\rG)$ for the total current flowing from $s$ to $t$. For $v \in v(G)$, write $P(v)$ for the potential at $v$ (equivalently, $P(v)$ is the probability a simple random walk starting from $v$ first visits $s$ before first visiting $t$). We note the following identity, which is an immediate consequence of conservation of current flow and Ohm's law, for later reference: 
\begin{equation}\label{eq:cursum}
\lambda(\rG) = \sum_{v \sim s,v\ne t} ( 1 - P(v)) = \deg_G(s) - 1 - \sum_{v \sim s} P(v)\, .
\end{equation}

Next, to each edge $e \in e(G)\setminus\{s,t\}$, associate a square $s_e$ whose side length is equal to the current flowing through $e$. 
The position of $s_e$ is determined as follows. With $e=\{u,v\}$, let $y(e)=\max(P(u),P(v))$. 
Finally, view $(G^*, s^*t^*)$ as an electrical network with potential $\lambda(\rG)$ at $s^*$ and grounded at $t^*$. Then 
for $e \in e(G)\setminus\{s,t\}$, with $e^*=\{u^*,v^*\}$, let $x(e^*)=\min(P(u^*),P(v^*))$. 
The top left corner of $s_e$ then has position $(x(e^*),y(e))$. 

Let $\cS(G,st)$ be the union of the boundaries of the squares $\{s_e, e \in e(G)\setminus \{s,t\}\}$. It is straightforward to show that $\cS(G^*,s^*t^*)$ may be obtained by rotating $\cS(G,st)$ counterclockwise by $90^{\circ}$, then translating and rescaling so that the squaring has height one and bottom left corner at the origin; this will be useful later. 
\begin{theorem}[\cite{brooks40dissection,benjamini96rw}]\label{thm:bsst}
Let $\rG=(G,st)$ be a finite, edge rooted, finite planar map such that $G-\{s,t\}$ is connected. Then the squares $\{s_e,e \in e(G)\setminus \{s,t\}\}$ have disjoint interiors, and $\cS(\rG)$ is a squaring of the rectangle $[0,\lambda(\rG)]\times[0,1]$.  Furthermore, $\cS$ is invertible up to zero current edges.
\end{theorem}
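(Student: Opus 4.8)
The plan is to establish the Brooks-Smith-Stone-Tutte correspondence by carefully verifying that the geometric construction above produces a genuine squaring, i.e., that the squares tile a rectangle with disjoint interiors. I would organize the argument around the electrical network interpretation, showing that the current values impose exactly the combinatorial constraints needed for the squares to fit together edge-to-edge.

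\medskip

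\noindent\textbf{Step 1: Positivity and consistency of side lengths.} First I would fix an orientation of each edge $e=\{u,v\}\in e(G)\setminus\{s,t\}$ so that current flows from higher to lower potential; the side length of $s_e$ is then the (nonnegative) current $|c(e)|$ through $e$. The key structural input is Kirchhoff's current law at each vertex $w\notin\{s,t\}$, namely that the total current entering $w$ equals the total current leaving $w$. Geometrically, the $y$-coordinate $y(e)=\max(P(u),P(v))$ groups squares by their top edge, and the current law says that at each potential level $P(w)$, the sum of side lengths of squares lying immediately below that level (edges with $y(e)=P(w)$ having $w$ as their lower endpoint) equals the sum of side lengths of squares lying immediately above (edges for which $w$ is the higher endpoint). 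This is precisely the condition that squares stack flush horizontally along each horizontal line $\{y=P(w)\}$.

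\medskip

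\noindent\textbf{Step 2: Horizontal placement via the dual network.} The symmetric role is played by the dual: I would verify that Kirchhoff's voltage law around each face of $G$ is equivalent to the current law in $(G^*,s^*t^*)$, so that the dual potentials furnish consistent $x$-coordinates $x(e^*)=\min(P(u^*),P(v^*))$. The planar duality here is the crux: the current through $e$ in $G$ equals the potential difference across $e^*$ in $G^*$, which is why assigning the square $s_e$ both its vertical extent (from primal potentials) and horizontal position (from dual potentials) yields a square rather than a general rectangle. I would then check that adjacent squares sharing a horizontal segment correspond to edges sharing a vertex, and adjacent squares sharing a vertical segment correspond to edges whose duals share a vertex, so the tiling closes up consistently.

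\medskip

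\noindent\textbf{Step 3: Disjoint interiors and the bounding rectangle.} Combining Steps 1 and 2, each point of the open rectangle $(0,\lambda(\rG))\times(0,1)$ lies in the interior of exactly one square: the total current $\lambda(\rG)$ from $s$ to $t$ sets the width, the unit potential drop sets the height, and the connectivity hypotheses on $G-\{s,t\}$ and $G^*-\{s^*,t^*\}$ guarantee that no potentials coincide in a way that would collapse a square or leave a gap. I would argue disjointness of interiors by a counting/area argument: the squares have total area $\sum_e |c(e)|^2$, and one shows this equals $\lambda(\rG)$ (the rectangle's area) using the identity $\sum_e |c(e)|^2 = \sum_{\text{levels}} (\text{width at that level})\cdot(\text{height of level}) = \lambda(\rG)\cdot 1$, which forces the tiling to be exact. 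Invertibility up to zero-current edges follows since the potentials, and hence all combinatorial adjacencies, can be read back off from the geometry.

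\medskip

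\noindent The step I expect to be the main obstacle is Step 2, the duality bookkeeping: making rigorous that primal currents and dual potentials are compatible requires a careful, orientation-sensitive treatment of the planar embedding, and the degenerate cases (zero-current edges, where $P(u)=P(v)$ and $s_e$ collapses to a point) must be handled so that the claimed invertibility ``up to zero current edges'' is literally correct. The connectivity assumptions that $G-\{s,t\}$ and $G^*-\{s^*,t^*\}$ are connected are exactly what rules out spurious coincidences of potentials and keeps the level structure well-defined, so I would lean on those hypotheses precisely at this step.
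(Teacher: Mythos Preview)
The paper does not supply its own proof of this theorem: it is stated with attribution to \cite{brooks40dissection,benjamini96rw} and used as a black box, so there is no in-paper argument to compare your proposal against. Your outline is a reasonable sketch of the classical Brooks--Smith--Stone--Tutte argument---Kirchhoff's current law at vertices for the horizontal fit, the dual network (equivalently the cycle law) for the vertical fit, and the energy identity $\sum_e c(e)^2 = \lambda(\rG)$ to force exact tiling by area.

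One caution on Step~3: the connectivity hypothesis on $G-\{s,t\}$ does \emph{not} prevent coincidences of potentials or collapsed squares. Zero-current edges, where $P(u)=P(v)$ and $s_e$ degenerates to a point, can and do occur under the stated hypotheses; this is exactly why the theorem only claims invertibility ``up to zero current edges''. What connectivity buys you is that the harmonic extension with boundary $\{s,t\}$ is well-defined and that $P(s)=1$, $P(t)=0$ are the unique extremal potential values, so the squares all sit inside $[0,\lambda(\rG)]\times[0,1]$. The tiling being exact then follows from the area identity together with the disjoint-interiors argument, not from any claim that potentials are generically distinct. You should rephrase that part of Step~3 accordingly.
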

Figure~\ref{fig:squaringwithcoords} contains an edge-rooted map $(G,st)$ with ten edges, and its associated squaring.

Theorem~\ref{thm:bsst} applies to finite graphs, but $\cS(\rG)$ is defined whenever $\rG$ and $\rG^*$ are locally finite (and $G-\{s,t\}$ is connected).
  In this case, there is no guarantee that $\cS(\rG)$ is a squaring of a rectangle, but this is known to hold in some cases (see, e.g., \cite{benjamini96rw}). Proposition~\ref{prop:conv_squaring}, below, implies that $\cS(\rG)$ is a squaring whenever $\rG$ is recurrent, a fact which we require and were unable to find in the literature. 

\begin{figure}[ht]
\includegraphics[width=.85\linewidth,page=2]{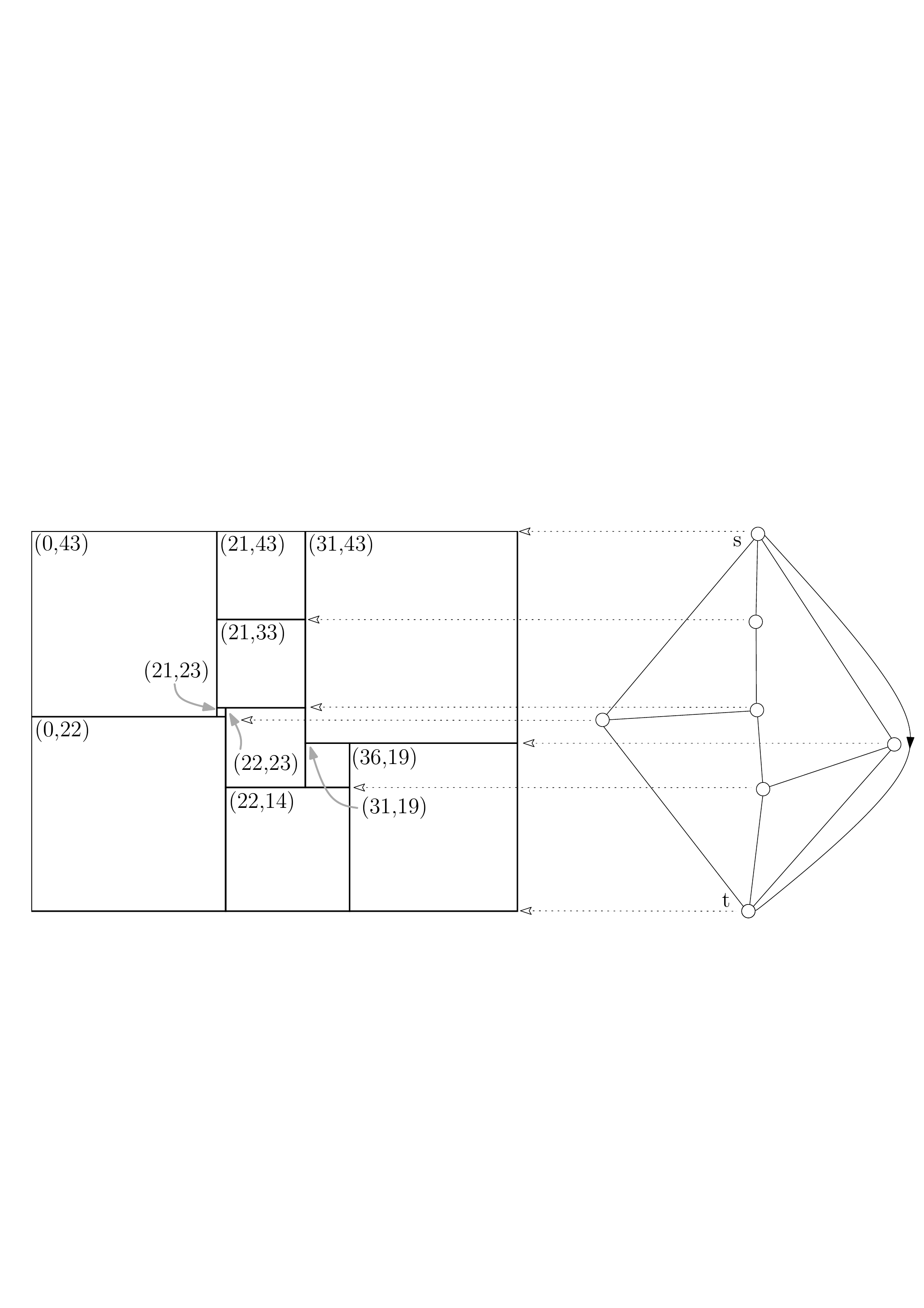}
\caption{\Small A squaring $S$ with ten squares, and the associated edge-rooted map $G$. The coordinates of the squares' top left corners, rescaled to be integers, are displayed. Horizontal lines connect vertices $v$ of $G$ with the associated line segments $\ell(v) \subset S$; vertical lines connect faces (dual vertices) with the associated line segments.}
\label{fig:squaringwithcoords}
\end{figure}

For the remainder of the section, we assume $(G,st)$ is such that the squares $\{s_e,e \in e(G)\setminus \{s,t\}\}$ have disjoint interiors and such that $\cS(G,st)$ is a squaring of a rectangle (by Theorem~\ref{thm:bsst}, this is the case if $G$ is finite, but we do not assume finiteness). 
For $v \in v(G)$, define a horizontal line segment $\ell(v)$ contained in $\cS(G,st)$ as follows (see Figure~\ref{fig:squaringwithcoords}). 
The $y$-coordinate of $\ell(v)$ is $P(v)$. Next, the leftmost (resp.\ rightmost) point of $\ell(v)$ is the minimal (resp.\ maximal) $x$-coordinate contained in any square $s_e$ for which $e$ is incident to $v$. (If no current flows through $v$ then $\ell(v)$ consists of a single point; in this case we call $\ell(v)$ {\em degenerate}, and otherwise we call $\ell(v)$ {\em non-degenerate}.) Likewise, to each face $f$ of $G$ we associate a vertical line segment $\ell(f)$, which may either be defined directly or using the above observation that the squaring of a graph and of its facial dual are related by a rotation; we omit details. We call $\ell(v)$ and $\ell(f)$ {\em primal} and {\em facial} lines of $\cS(G,st)$, respectively. 

Observe that if $e$ is incident to vertex $v$ (resp.\ face $f$) then $s_e \cap \ell(v)$ is a horizontal border of $s_e$ (resp.\ $s_e \cap \ell(f)$ is a vertical border of $s_e$).  Similarly, if vertex $v$ is incident to face $f$ in $G$ then there is an edge $e$ incident to both $v$ and $f$, from which it follows that $\ell(v) \cap \ell(f) \ne \emptyset$. The disjointness of the interiors of the squares $\{s_e,e \in e(G)\setminus \{s,t\}\}$ implies that any distinct lines $\ell,\ell'$, whether primal or facial, are either disjoint or intersect in a single point. It follows that if $\{u,v\} \in e(G)$ then the top and bottom borders of $s_e$ are both contained in $\ell(v) \cup \ell(w)$. 

Given a squaring of a rectangle $S$ (with lower left corner at the origin), one may define an edge-rooted map $(G',s't')$ with $\cS(G',s't')=S$ as follows. The vertices of $G'$ are the maximal horizontal line segments of $S$. For each square $s$ of $S$, there is an edge connecting the vertices $\ell,\ell'$ of $G'$ that border the top and bottom of $s$, respectively. The graph $(G',s't')$ need not be even locally finite. However, when $(G',s't')$ is finite then $\cS(G',s't')=S$ (see \cite{brooks40dissection}, Theorem 4.31). 

Despite the construction of the preceding paragraph, the function $\cS(\cdot)$ is not invertible: if the current through $e \in e(G)\setminus \{s,t\}$ is zero, and $\hat{G}$ is obtained from $G$ by either deleting or contracting $e$, then then $\cS(\hat{G},st) = \cS(G,st)$; see Figure \ref{fig:badsquaring}. However, it is not hard to see that zero-current edges are the only way injectivity can fail. In particular, if $S$ is such that no four squares have a common point of intersection, then there is a unique 2-connected edge-rooted map $(G',s't')$ with squaring $S$. 

We conclude the section with a lemma. 
\begin{lemma}\label{lem:3con_nondegen}
If $(G,st)$ is a finite $3$-connected edge-rooted planar map, then for all vertices $v$ (resp.\ faces $f$) of $G$, $\ell(v)$ (resp.\ $\ell(f)$) is non-degenerate.
\end{lemma}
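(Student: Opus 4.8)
The plan is to reduce the statement to a purely potential-theoretic claim about the harmonic function $P$ and then to exploit planarity together with $3$-connectivity. First I would record the key translation: since the side length of $s_e$ equals the current through $e$, which by Ohm's law equals $|P(u)-P(v)|$ for $e=\{u,v\}$, the segment $\ell(v)$ is \emph{non-degenerate} precisely when $v$ has at least one neighbour $u$ with $P(u)\ne P(v)$ (such an edge contributes a horizontal border of positive length to $\ell(v)$). Moreover, since $\cS(G^*,s^*t^*)$ is obtained from $\cS(\rG)$ by a rotation and rescaling, and the planar dual of a $3$-connected map is again $3$-connected, the claim for facial segments $\ell(f)$ follows from the claim for primal segments applied to $G^*$. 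So it suffices to show: for every vertex $v$, not all neighbours of $v$ share the potential $P(v)$.

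The root vertices $s,t$ are handled directly. If every network-neighbour of $s$ had potential $P(s)=1$, then \eqref{eq:cursum} would give $\lambda(\rG)=\sum_{v\sim s,\,v\ne t}(1-P(v))=0$; but $3$-connectivity (hence $3$-edge-connectivity) implies $s$ and $t$ remain connected after cutting $st$, so the effective conductance, and hence $\lambda(\rG)$, is strictly positive, a contradiction. The vertex $t$ is symmetric, replacing $P$ by $1-P$.

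For an interior vertex $v$ with $c:=P(v)\in(0,1)$, suppose for contradiction that all neighbours of $v$ have potential $c$, and let $C$ be the connected component of the level set $\{w:P(w)=c\}$ containing $v$; note $s,t\notin C$. A short maximum-principle argument shows that the superlevel set $A:=\{P>c\}$ and the sublevel set $B:=\{P<c\}$ are each connected (if, say, $A$ had a component avoiding $s$, the vertex maximising $P$ on that component would, being neither $s$ nor $t$, violate harmonicity), and that the external boundary $N$ of $C$ meets both $A$ and $B$ (if, say, $N\subseteq B$, a vertex of $C$ adjacent to $N$ would have all neighbours of potential $\le c$ and at least one $<c$, again contradicting harmonicity). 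Writing $N^+=N\cap A$ and $N^-=N\cap B$, we conclude that $N$ separates $C$ from $\{s,t\}$ with $N^+,N^-$ both non-empty.

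It remains to extract a contradiction from the planar embedding, and this is the step I expect to be the main obstacle. The intuition is that $C$ is a ``plateau'' through which current must pass from the connected higher region $A$ to the connected lower region $B$, yet $v$ sits strictly inside $C$ with no current on any incident edge. I would argue that, because $A$ and $B$ are connected and vertex-disjoint from $C$, each lies in a single face of the induced plane graph $G[C]$ (a path within $A$ cannot cross $C$); then, by tracking how $N^+$ and $N^-$ attach to $C$ along the relevant facial walk, one should be able to produce a set of at most two vertices whose removal separates $C$, and hence $v$, from $\{s,t\}$, contradicting $3$-connectivity. Making this separation argument fully rigorous---in particular controlling the face structure of $G[C]$ and the cyclic alternation of $N^+$ and $N^-$ around $C$ without assuming $C$ is simply connected---is the delicate part, and is precisely where planarity and $3$-connectivity are genuinely used.
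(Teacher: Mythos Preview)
Your reduction---``$\ell(v)$ degenerate'' $\Leftrightarrow$ ``every edge at $v$ carries zero current'' $\Leftrightarrow$ ``$P$ is constant on the closed neighbourhood of $v$''---is correct, as is the duality reduction for faces and your treatment of $s$ and $t$. The maximum-principle claims about $A$, $B$ and $N$ are also fine. But the step you flag as ``the delicate part'' really is a gap: from the connectedness of $A$ and $B$ and the fact that $N^+,N^-$ are both non-empty, it is not at all clear how to manufacture a cutset of size at most two separating $v$ from $\{s,t\}$. Nothing you have written rules out $C$ being a large planar patch with many boundary vertices alternating between $N^+$ and $N^-$; extracting a $2$-separator from the face structure of $G[C]$ would require a substantial additional argument, and you have not supplied one.

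The paper avoids this entirely by working in the squaring rather than in the graph. If $\ell(w)$ is the single point $z$, then every square $s_e$ with $e\ni w$ equals $\{z\}$, so for each neighbour $u$ of $w$ the segment $\ell(u)$ contains $z$. Now use a geometric fact already established about $\cS(G,st)$: any two distinct primal lines are disjoint or meet in a single point. Since primal lines are horizontal, at most two non-degenerate ones can pass through the fixed point $z$ (one extending to the left, one to the right). Hence the set $U=\{u: z\in\ell(u),\ \ell(u)\text{ non-degenerate}\}$ has $|U|\le 2$. Every path from $w$ to $s$ or $t$ must leave the set of vertices whose line is the single point $z$, and the first vertex at which it does so lies in $U$; thus $U$ separates $w$ from $\{s,t\}$, contradicting $3$-connectivity. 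This two-line argument replaces your entire planarity/level-set program: the geometry of the squaring has already encoded the planar separation you were trying to build by hand.
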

\begin{proof}
Let $(G,st)$ be $3$-connected. 
Suppose there exists $w \in v(G)$ such that $\ell(w)$ is a single point, say $\ell(w)=z \in \mathbb{R}^2$. Then any edge $e$ incident to $w$ has $s_e = z$. Since for any neighbour $v$ of $w$, $\ell(w)$ is a border of $s_{\{v,w\}}$, for such $v$ we have $z \in \ell(v)$. Letting $U = \{u \in v(G): z \in \ell(u),u\mbox{ is non-degenerate}\}$, it follows that $U$ is a cutset in $G$ separating $w$ (and any other vertices $w'$ with $\ell(w') =z$) from $s$ and $t$. But since distinct lines are either disjoint or intersect in a single point, $U$ has size at most two, which contradicts that $G$ is $3$-connected.
\end{proof}

\begin{figure}[ht]
\includegraphics[width=0.85\linewidth]{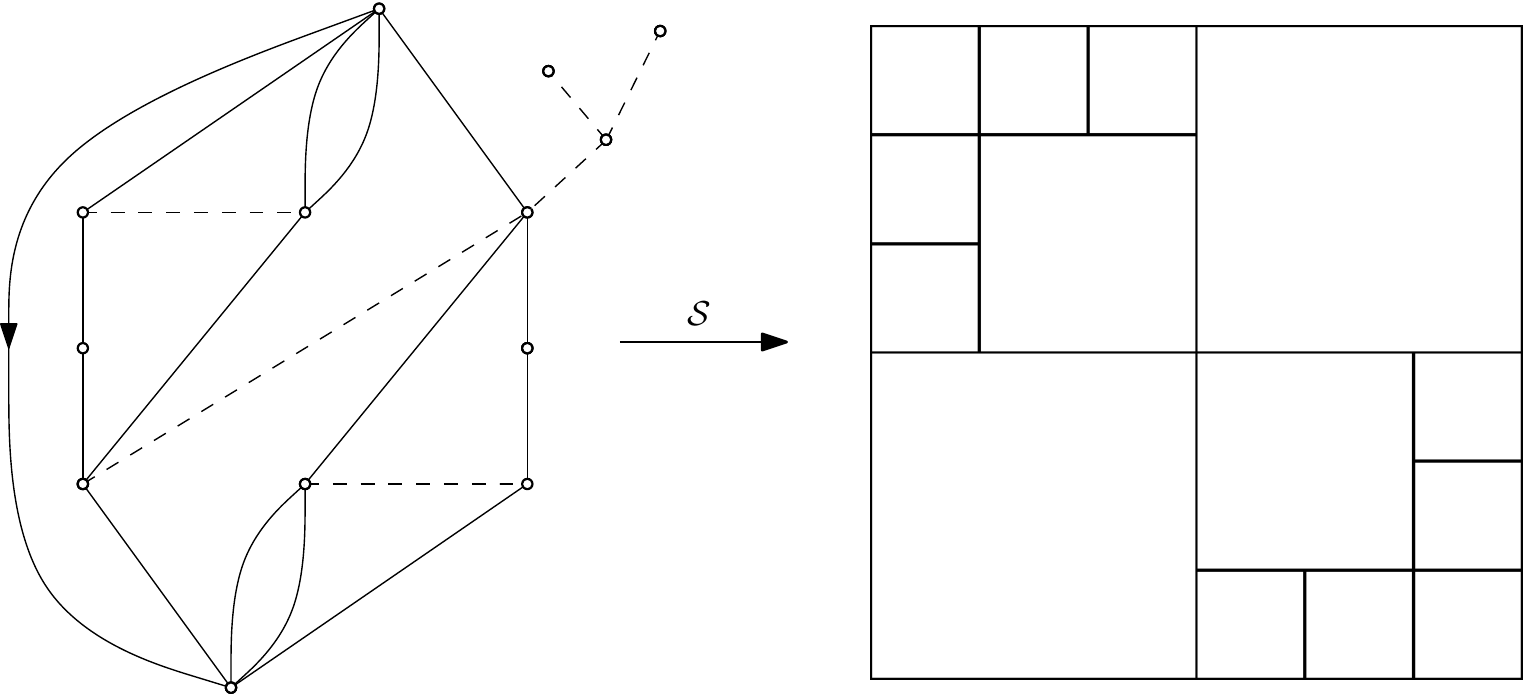}
\caption{\small An example of the non-invertibility of $\cS$. Removing or contracting any of the dotted edges leaves the squaring unchanged.} 
\label{fig:badsquaring}
\end{figure}

\subsection{Bijections for random trees and maps} \label{ssec:bijec}

Let $\rT=(T,uv)$ be a finite edge-rooted map which is a {\em binary tree}: that is, each non-leaf node of $T$ has degree three. Fusy, Poulalhon and Schaeffer \cite{fusy08dissection} described an invertible ``closure" operation which transforms $T$ into an edge-rooted {\em irreducible quadrangulation of a hexagon}, which we denote $\rQ(\rT)$.\footnote{An edge-rooted map is an irreducible quadrangulation of a hexagon if the unbounded face has degree six, all other faces have degree four, and every cycle of length four bounds a face.} The construction of $\rQ(\rT)$ from $T$, which we now explain, is illustrated in Figure~\ref{fig:closure}. Perform a clockwise contour exploration of $T$. Each time a leaf $w$ is followed by four internal vertices $x,y,z,a$, identify $w$ with $a$ so that the unbounded face lies to the left of the oriented edge $wa$; the face to the right will necessarily have degree four. 
\begin{figure}[ht]
\begin{subfigure}[b]{0.3\textwidth}
		\includegraphics[width=\textwidth,page=1]{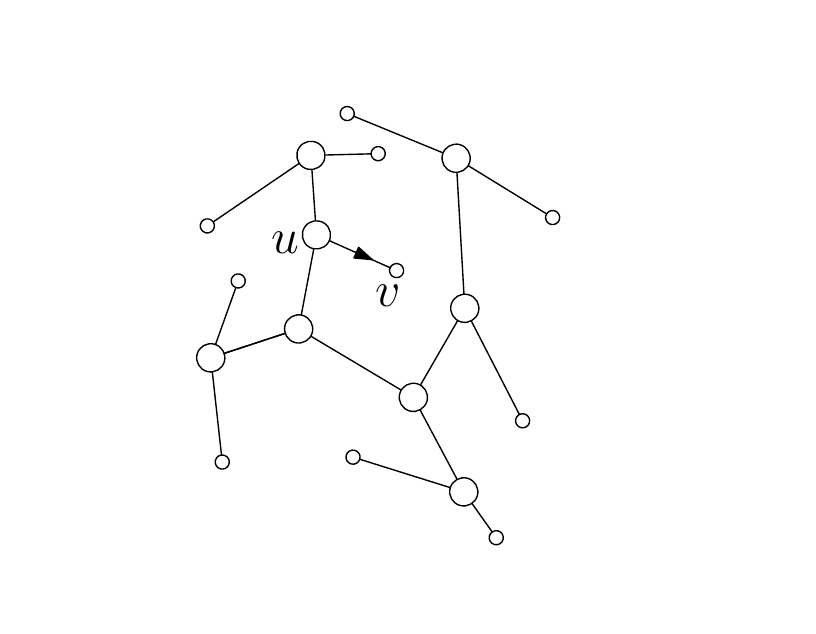}
                \caption{A binary tree $\rT$.\\}
                \label{fig:closure-inner}
\end{subfigure}%
\quad
\begin{subfigure}[b]{0.3\textwidth}
		\includegraphics[width=\textwidth,page=2]{figures/treetohex.pdf}
                \caption{The partial closure $Q_0$.\\ }
                \label{fig:closure-hex}
\end{subfigure}%
\quad
\begin{subfigure}[b]{0.3\textwidth}
		\includegraphics[width=\textwidth,page=4]{figures/treetohex.pdf}
                \caption{The map $\rQ(\rT)$.}
                \label{fig:closure-orient}
\end{subfigure}%
\quad
\caption{Closure edges are drawn with two colours: blue and black if internal, red and black if incident to the hexagonal face.} 
\label{fig:closure}
\end{figure}
In the modified map, the vertex formed from merging $w$ and $a$ is considered to be internal. Continue exploring the modified map in clockwise fashion, making identifications according to the preceding rule, until no identifications are possible; call the result the {\em partial closure} of $T$, and denote it $Q_0$ (see Figure~\ref{fig:closure-hex}). A counting argument shows that in $Q_0$, at least $3$ leaves remain. Write $\ell_0$ for the first such leaf encountered by the contour process (in clockwise order starting from $uv$). 

Draw a hexagon in the unbounded face of $Q_0$. It can be shown that there is a unique (up to isomorphism of planar maps) way to identify the leaves of $Q_0$ with vertices of the hexagon so that in the resulting graph, all bounded faces have degree four. The result is the graph $\rQ(\rT)$, which we view as rooted at $uv$ (the vertex $v$ may have been identified with another vertex during the closure operation, as in Figure~\ref{fig:closure-orient}; we abuse notation and continue to use the same name). 
Write $\cT_n$ for the set of edge-rooted binary trees with $n$ internal vertices, and $\cQ_n$ for the set of edge-rooted quadrangulations of a hexagon with $n+6$ vertices, such that the root edge is not incident to the hexagonal face. 

\begin{theorem}[\cite{fusy08dissection}]
For each $n \ge 1$, the closure operation is a bijection between $\cT_n$ and $\cQ_n$. 
\end{theorem}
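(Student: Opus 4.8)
The plan is to prove the statement in the standard way for a combinatorial bijection: exhibit the closure as a well-defined map $\cT_n \to \cQ_n$, construct an explicit inverse \emph{opening} map $\cQ_n \to \cT_n$, and verify that the two compositions are the identity. I would organize everything around a \emph{local} characterization of the \emph{closure edges}---those edges created during the identifications and during the final matching to the hexagon---so that opening can be defined intrinsically on a quadrangulation, with no reference to any tree.

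First I would check that closure is well-defined and lands in $\cQ_n$. Termination is immediate, since each identification in the partial-closure phase strictly decreases the number of leaves. The counting argument alluded to in the text (Euler's formula, together with the fact that a binary tree with $n$ internal vertices has $n+2$ leaves) shows that exactly the right number of leaves survive to be matched to the hexagon, giving an output with $n+6$ vertices. I would then verify by construction that every bounded face of $\rQ(\rT)$ has degree four, and that the map is \emph{irreducible}, i.e.\ every $4$-cycle bounds a face: a separating $4$-cycle would enclose a nonempty portion of the partial closure $Q_0$, contradicting that identifications are performed greedily in clockwise contour order. Tracking the root edge $uv$ through the identifications, one checks that it remains non-incident to the hexagonal face, so $\rQ(\rT)\in\cQ_n$.

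Second, and this is the heart of the matter, I would define the opening map. Since a quadrangulation of a hexagon has all faces of even degree, it is bipartite; I would two-colour its vertices and combine this colouring with the orientation induced by the clockwise contour to give a purely local rule inside each quadrilateral face that distinguishes the two \emph{tree edges} from the \emph{closure edge}. The key claim is that irreducibility makes this rule unambiguous: were some $4$-cycle not to bound a face, the designation of a closure edge could become inconsistent. Having marked the closure edges, opening cuts along them and unfolds the resulting structure, and I would show the result is connected and acyclic---hence a tree---all of whose internal vertices have degree three, with exactly $n$ of them; thus opening maps $\cQ_n\to\cT_n$.

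Finally I would verify that the two maps are mutually inverse. Because both operations are governed by the canonical clockwise contour order and act locally, each closure identification corresponding to exactly one cut in the opening of the face it creates, one checks by induction on the number of identifications that $\text{opening}\circ\text{closure}$ and $\text{closure}\circ\text{opening}$ each return the original object, with the root edge preserved throughout. I expect the main obstacle to be the second step: proving that the local closure-edge rule is well-defined \emph{precisely} when the quadrangulation is irreducible, and that cutting along the marked edges yields a genuine tree rather than a graph retaining cycles. This is exactly where the irreducibility hypothesis in the definition of $\cQ_n$ is indispensable.
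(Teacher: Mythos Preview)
The paper does not prove this theorem at all: it is stated as a cited result of Fusy, Poulalhon and Schaeffer \cite{fusy08dissection}, with no proof given in the present paper. There is therefore nothing in the paper to compare your proposal against.

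Your sketch is a plausible outline of how such closure bijections are established in the literature (well-definedness of the closure, an explicit opening map identifying closure edges via a canonical local rule, and verification that the compositions are identities). If you want to check it against an actual proof, you would need to consult \cite{fusy08dissection} directly; the present paper treats the result as a black box.
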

Given $\rQ \in \cQ_n$, number the vertices of the hexagonal face in clockwise order as $0,\ldots,5$, where $0$ is the vertex identified with $\ell_0$ by the closure operation. Then, for $i \in \{0,\ldots,5\}$, let $\rQ^{(i)}$ be obtained from $\rQ$ by adding the oriented edge from $i$ to $(i+3)\mod 6$. The result is a doubly edge-rooted quadrangulation (every face has degree four) which may no longer be irreducible. However, we do have the following. Let $\cQ_n^*$ be the set of triples $(Q,uv,u'v')$, where $Q$ is an irreducible quadrangulation with $n+6$ vertices and $uv,u'v'$ are oriented edges of $Q$ not lying on a common face, and let $\cT_n^* = \{(\rT,i) \in \cT_n \times \{0,\ldots,5\}: \rQ(\rT)^{(i)} \in \cQ_n^*\}$. 
\begin{theorem}[Fusy Thm 4.8]
The function sending $(\rT,i)$ to $\rQ(\rT)^{(i)}$ restricts to a bijection between $\cT_n^*$ and $\cQ_n^*$. 
\end{theorem}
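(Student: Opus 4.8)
The plan is to exhibit an explicit two-sided inverse to the map $(\rT,i)\mapsto \rQ(\rT)^{(i)}$, which will give the bijection at once. Since $\cT_n^*$ is \emph{defined} as the set of pairs whose image lies in $\cQ_n^*$, the forward map certainly sends $\cT_n^*$ into $\cQ_n^*$; the whole content is injectivity and surjectivity. The starting observation is that passing from $\rQ(\rT)$ to $\rQ(\rT)^{(i)}$ merely adds one oriented edge, the long diagonal of the hexagon from vertex $i$ to vertex $i+3$ (mod $6$). The natural candidate inverse is therefore: given $(Q,uv,u'v')\in\cQ_n^*$, delete the oriented edge $u'v'$ and set $\rQ := Q-u'v'$. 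Deleting $u'v'$ merges its two incident faces into one; I must show this merged face is a genuine hexagon and that $\rQ$ is an irreducible quadrangulation of a hexagon in $\cQ_n$, rooted at $uv$.

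The structural claims to verify are that the two faces incident to $u'v'$ are (i) distinct and (ii) meet only along the edge $u'v'$, and that (iii) $\rQ$ is itself irreducible. For (i), a simple quadrangulation of the sphere is $2$-connected and hence has no cut-edge, so the two faces differ. For (ii), if they shared a vertex other than the endpoints of $u'v'$, the union of their non-shared boundary edges would form a $4$-cycle strictly enclosing $u'v'$ together with that extra vertex, giving a $4$-cycle that bounds no face and contradicting the irreducibility of $Q$. Thus the merge produces a hexagon, on whose clockwise boundary $u'$ and $v'$ occupy antipodal (distance-three) positions. Claim (iii) is then automatic: any $4$-cycle of $\rQ$ avoids $u'v'$, hence is a $4$-cycle of $Q$, hence bounds a face of $Q$ by irreducibility, and that face — not being one of the two destroyed by the deletion — survives intact in $\rQ$. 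The vertex count is unchanged, so $\rQ$ has exactly $n$ internal vertices.

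The hypothesis that $uv$ and $u'v'$ do not lie on a common face of $Q$ plays exactly one role: it is the root condition of $\cQ_n$. The hexagonal face of $\rQ$ is the union of the two faces of $Q$ incident to $u'v'$, so $uv$ is incident to the hexagon precisely when it shares a face with $u'v'$; the hypothesis therefore guarantees $uv$ is not incident to the hexagon, i.e.\ $\rQ\in\cQ_n$. Applying the Fusy--Poulalhon--Schaeffer bijection yields a unique $\rT\in\cT_n$ with $\rQ(\rT)=\rQ$; numbering the hexagon clockwise with $0=\ell_0$ (as determined by the closure), the antipodal positions of the tail $u'$ and head $v'$ recover a well-defined $i\in\{0,\ldots,5\}$, the orientation of $u'v'$ being what distinguishes $i$ from $i+3$. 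Re-adding the oriented diagonal from $i$ to $i+3$ restores $Q$ with both of its roots, so $\rQ(\rT)^{(i)}=(Q,uv,u'v')$ and $(\rT,i)\in\cT_n^*$. Because deleting then re-adding the same edge is the identity and the FPS bijection handles the tree--quadrangulation passage in both directions, this deletion map is a genuine two-sided inverse, proving the restricted map is a bijection.

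I expect the one genuinely combinatorial obstacle to be the verification of claim (ii): that irreducibility of $Q$ forbids the two faces incident to $u'v'$ from sharing an extra vertex, so that deletion yields a bona fide hexagon rather than a degenerate face. Making the enclosed-$4$-cycle argument airtight — and checking the bipartite parity constraints that govern which boundary vertices of the two quadrilaterals can coincide — is the crux; the remaining work is bookkeeping of roots, orientations, and vertex counts together with the already-established FPS bijection.
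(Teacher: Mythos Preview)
The paper does not prove this statement; it is quoted as Theorem 4.8 of Fusy--Poulalhon--Schaeffer and used as a black box, so there is no proof in the paper to compare your attempt against.

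On its own merits, your argument is the natural one and is essentially correct. The inverse is indeed ``delete the second root edge and invoke the $\cT_n\leftrightarrow\cQ_n$ bijection'', and the three structural claims you isolate are exactly the right checkpoints. A few comments. For (i), your appeal to $2$-connectedness is fine; note that you are really using that a \emph{simple} planar map with all faces of degree four is automatically $2$-connected (a cut vertex would force a face whose boundary walk revisits that vertex, hence a multi-edge). For (ii), your separating-$4$-cycle argument is right but deserves one more line: the bipartite constraint forces any extra coincidence to be of the form $a=c$ or $b=d$ in the obvious labelling of the two quadrilateral boundaries, and in either case one endpoint of $u'v'$ is then forced to have degree~$2$ (its two incident edges each see $f_1$ and $f_2$), so the $4$-cycle you build genuinely encloses that endpoint and cannot bound a face since $|v(Q)|=n+6\ge 7$. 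For (iii) your argument is clean; the point that the surviving $4$-cycle cannot be $\partial f_1$ or $\partial f_2$ because those contain $u'v'$ is exactly right. The root condition and the recovery of $i$ from the oriented position of $u'$ on the hexagon are bookkeeping, as you say. One cosmetic slip: $\cQ_n$ is defined in the paper by total vertex count $n+6$, not by ``$n$ internal vertices'', though of course these agree once the six hexagon vertices are accounted for.
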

To conclude the section, we recall Tutte's classical bijection between maps and quadrangulations, which associates to a rooted map $(G,uv)$ a quadrangulation $(Q,uv')$ as follows (see Figure~\ref{fig:trivialbij} for an illustration). 
 \begin{wrapfigure}[16]{R}{.4\textwidth}
\vspace{-0.3cm}
\rule[5pt]{.4\textwidth}{0.5pt}
		\includegraphics[width=0.4\textwidth,page=6]{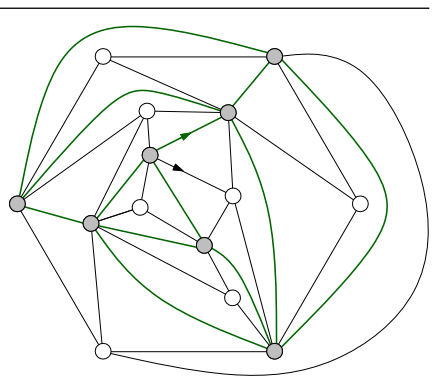}
\caption{\small An edge-rooted map (grey vertices; thicker, green edges), and its image under Tutte's bijection (grey and white vertices; thinner, black edges).}
\label{fig:trivialbij}
\rule[8pt]{.4\textwidth}{0.5pt} 
\end{wrapfigure}
Draw a node $v_f$ in each face $f$ of $G$. Then for each $u \in v(G)$ and each face $f$ incident to $u$, add an edge $\{u,v_f\}$. Erase all edges in $E(G)$. Finally, let $v'=v_f$ where $f$ is the face lying to the right of $uv$. It is well-known (see, e.g., \cite{fusy08dissection}, Theorem 3.1) that Tutte's bijection restricts to a bijection between $3$-connected, edge-rooted planar maps with $n$ edges and edge-rooted, irreducible quadrangulations with $n$ faces. Thus, combining the closure operation with Tutte's bijection yields a bijection between $\cT_n$ and a collection of edge-rooted maps which includes all $3$-connected maps with $n+4$ edges. It turns out that the $3$-connected maps comprise an asymptotically constant proportion of the collection; we return to this point in the next section.

\subsection{Growth procedures}\label{ssec:growth}
In this section, we describe the growth procedures, introduced in \cite{ab14hex}, for random irreducible quadrangulations of a hexagon and to the maps associated to such quadrangulations by Tutte's bijection. We restrict our discussion to the features of the procedures required in the current work, and refer the reader to \cite{ab14hex} for further details. 

Luczak and Winkler \cite{luczak04building} showed that there is a growth procedure for uniformly random binary plane trees. More precisely, there exists a stochastic process $(\bT_n)_{n=0}^\infty$, with the following properties. First, for each $n$, $\bT_n$ is uniformly distributed over edge-rooted binary trees with $n$ internal nodes. Second, for all $n$, $\bT_n$ is a subtree of $\bT_{n+1}$. 

The sequence $(\bT_n,n \ge 1)$ converges almost surely in the local weak sense to a limit $\bT_{\infty}$, which is essentially a critical Galton-Watson tree whose offspring law $\mu$ satisfies $\mu(\{0\})=\mu(\{2\})=1/2$, conditioned to be infinite. It is shown in \cite{ab14hex} that the closure operation, when applied to $\bT_{\infty}$, yields an infinite, locally finite edge-rooted quadrangulation $\bQ_{\infty}$, and that $\rQ(\bT_n) \to \bQ_{\infty}$ almost surely, in the local weak sense.\footnote{In the partial closure of $\bT_{\infty}$, almost surely no leaves remain, so the addition of the ``external hexagon" does not occur. Thus, $\bQ_{\infty}$ is perhaps more accurately described as the partial closure of $\bT_{\infty}$.}

Now let $i$ be uniform in $\{0,\ldots,5\}$, let $(Q_n,st')$ be the (singly) edge-rooted quadrangulation obtained from $\rQ(\bT_n)^{(i)}$ by unrooting at the second root edge (but not deleting the edge), and let $\bG_n=(G_n,st)$ be the pre-image of $(Q_n,st')$ under Tutte's bijection. Then $(Q_n,st')$ is irreducible if and only if $\bG_n$ is $3$-connected. Fusy, Poulalhon and Schaeffer \cite{fusy08dissection} show that $\prob(\bG_n\mbox{ is $3$-connected}) \to 2^8/3^6$ as $n \to \infty$.\footnote{Note that this is a statement about large-$n$ asymptotics of marginal probabilities, and says nothing about the {\em dynamics} of $(\bG_n,n \ge 1)$.}  They further deduce from the bijective results described in Section~\ref{ssec:bijec} that the conditional law of $\bG_n$, given that $\bG_n$ is $3$-connected, is uniform over $3$-connected, edge-rooted maps with $n+4$ edges. 

It is an easy consequence of the convergence of $\rQ(\bT_n)$ to $\bQ_{\infty}$ that $\bG_n$ has an a.s. local weak limit $\bG_{\infty}=(G_{\infty},st)$, and $\bG_{\infty}$ is obtained from $\bQ_{\infty}$ via Tutte's bijection. We also have the following result.
\begin{theorem}[\cite{ab14hex}, Theorem 7]\label{thm:distlim}
Let $\hat{\bG}_n$ be uniformly distributed on the set of 3-connected rooted maps with $n+4$ edges; then $\hat{\bG}_n$ converges in distribution to $\mathbf{G}_\infty$ in the local weak sense. 
\end{theorem}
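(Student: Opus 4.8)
The plan is to turn the statement into a conditioning problem and resolve it via a decorrelation estimate. The text recalls that the conditional law of $\bG_n$, given the event $A_n := \{\bG_n \text{ is } 3\text{-connected}\}$, is uniform over $3$-connected edge-rooted maps with $n+4$ edges; hence $\hat{\bG}_n$ has exactly the law of $\bG_n$ conditioned on $A_n$. Since $\bG_n \to \bG_\infty$ almost surely in the local weak sense (hence in distribution) and $\prob(A_n) \to p := 2^8/3^6 > 0$, the theorem amounts to showing that conditioning on $A_n$ does not perturb this limit.

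Fix a finite rooted map $(H,\rho)$ and radius $r$, let $E_n = \{B_{G_n}(st,r) \cong (H,\rho)\}$, and let $E_\infty$ be its analogue for $\bG_\infty$; local events of this type characterize local weak convergence, so it suffices to prove $\prob(E_n \mid A_n) \to \prob(E_\infty)$. From the identity $\prob(E_n \mid A_n) = \prob(E_n) + \mathrm{Cov}(\mathbf 1_{E_n}, \mathbf 1_{A_n})/\prob(A_n)$, together with $\prob(E_n) \to \prob(E_\infty)$ and $\prob(A_n) \to p > 0$, this reduces to $\mathrm{Cov}(\mathbf 1_{E_n},\mathbf 1_{A_n}) \to 0$. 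Because $\bG_n \to \bG_\infty$ almost surely, $\mathbf 1_{E_n} \to \mathbf 1_{E_\infty}$ almost surely, and bounded convergence (via $\expe|\mathbf 1_{E_n} - \mathbf 1_{E_\infty}| \to 0$) lets me replace $\mathbf 1_{E_n}$ by the fixed indicator $\mathbf 1_{E_\infty}$ at a cost of $o(1)$. The task thus becomes the cleaner $\mathrm{Cov}(\mathbf 1_{E_\infty}, \mathbf 1_{A_n}) \to 0$, where $E_\infty$ is now a fixed event measurable with respect to a finite neighbourhood of the root, and $A_n$ is the event that the diameter edge added to $\rQ(\bT_n)^{(i)}$ preserves irreducibility.

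I would prove this decorrelation from the spine structure of the growth procedure. Almost surely $E_\infty$ depends only on a finite portion of $\bT_\infty$ around the root $uv$, while $A_n$ is read off near the external hexagon of $\rQ(\bT_n)$. The decisive structural fact is that $\bT_\infty$ is Kesten's tree — critical Galton--Watson with $\mu(\{0\})=\mu(\{2\})=1/2$ conditioned to survive — consisting of a single infinite spine carrying i.i.d.\ finite bushes. Along the clockwise contour used by the closure, the surviving leaves of the partial closure, together with the hexagon, recede to graph-infinity from the root as $n \to \infty$ (consistent with the fact, noted in the text, that no leaves survive in $\bT_\infty$). I would condition on the finite top-of-tree region determining $E_\infty$ and use the renewal/Markov structure along the spine to show that the conditional law of the far-away hexagon configuration, and therefore of $\mathbf 1_{A_n}$, is asymptotically insensitive to that conditioning, yielding the covariance bound.

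The main obstacle is exactly this last step: spatial separation does not by itself yield independence, since $\mathcal L(\bG_n)$ is not a product measure and the global constraint $A_n$ is transported back toward the root through the closure and Tutte bijections. Making the decorrelation rigorous requires controlling how deep into the tree the closure operation must look to decide both $E_\infty$ and $A_n$, together with an exponential tail bound on the root degree (of the kind needed for Theorem~\ref{thm:limrecur}) guaranteeing tightness of the relevant neighbourhoods. I would package these into a coupling, furnished by the spine decomposition, under which $E_\infty$ and $A_n$ become measurable with respect to asymptotically independent pieces of the process off an event of probability $o(1)$, from which $\mathrm{Cov}(\mathbf 1_{E_\infty},\mathbf 1_{A_n}) \to 0$, and hence the theorem, follows.
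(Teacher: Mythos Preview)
The paper does not prove this theorem: it is imported verbatim as Theorem~7 of \cite{ab14hex}, and the only argument offered here is the one-sentence heuristic that ``the failure of $\bG_{n}$ to be $3$-connected is caused by the failure of $\rQ(\bT_n)$ to be irreducible. This is a `local defect', occurring near the hexagon, and the hexagon disappears to infinity in the limit.'' Your proposal is exactly a formalisation of that heuristic. The reduction from conditioning to a covariance bound is clean and correct, the replacement of $\mathbf 1_{E_n}$ by $\mathbf 1_{E_\infty}$ via almost-sure local stabilisation is legitimate, and you have located the real content precisely where the paper's heuristic places it: the asymptotic decorrelation between a root-neighbourhood event and the hexagon-neighbourhood event $A_n$, to be extracted from the spine/renewal structure of $\bT_\infty$. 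The obstacles you name---that the closure is not literally local, so one must control how far into the tree one reads to decide $E_\infty$, and that separation along the spine must be upgraded to approximate independence---are genuine and are what the cited reference has to supply; there is nothing further to compare against in the present paper.
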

In brief, Theorem~\ref{thm:distlim} holds because the failure of $\bG_{n}$ to be $3$-connected is caused by the failure of $\rQ(\bT_n)$ to be irreducible. This is a ``local defect", occurring near the hexagon, and the hexagon disappears to infinity in the limit. 

Theorem~\ref{thm:distlim} implies that $G_{\infty}$ is a.s.\ $3$-connected, so up to reflection has a unique planar embedding. (The definition of $G_{\infty}$ as an almost sure limit of a sequence of finite maps also {\em uniquely} specifies an embedding of $G_{\infty}$.) We thus henceforth view $G_{\infty}$ as a planar map. We conclude the section with a quick application of Theorem~\ref{thm:limrecur}. 
\begin{theorem}\label{thm:ginfinity_recur}
$G_{\infty}$ and its planar dual $G_{\infty}^*$ are both a.s.\ recurrent. 
\end{theorem}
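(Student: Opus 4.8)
The plan is to verify the two hypotheses of the Gurel-Gurevich--Nachmias recurrence criterion (Theorem~\ref{thm:limrecur}) for $G_\infty$ and, separately, for its planar dual $G_\infty^*$. Since recurrence is a property of the underlying graph and is independent of the choice of root, it suffices to exhibit each of $G_\infty$ and $G_\infty^*$, with \emph{some} root vertex, as a distributional limit of finite planar graphs whose root degree has exponential tail; the conclusion of Theorem~\ref{thm:limrecur} then gives a.s.\ recurrence directly.

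First I would produce the required stationary-rooted approximating sequence. Let $\hat{\bG}_n=(\hat{G}_n,st)$ be uniform over $3$-connected edge-rooted maps with $n+4$ edges, as in Theorem~\ref{thm:distlim}, and let $\rho_n=s$ be the tail of its root edge. Because $\hat{\bG}_n$ is uniform among rooted maps, conditionally on the unrooted graph $\hat{G}_n$ the root edge is (up to the standard rooting conventions) a uniformly chosen directed edge; hence its tail $\rho_n$ has the degree-biased law $\pi(v)=\deg_{\hat{G}_n}(v)/2e(\hat{G}_n)$, which is exactly the stationary measure, so condition (a) in the definition of a distributional limit holds. For condition (b), note that rooting at the tail of the root edge is a local operation: the ball $B_{\hat{G}_n}(\rho_n,r)$ is determined by a bounded neighbourhood of the root edge. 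Consequently the edge-rooted convergence $\hat{\bG}_n\to\bG_\infty$ of Theorem~\ref{thm:distlim} upgrades to the vertex-rooted convergence $(\hat{G}_n,\rho_n)\to(G_\infty,s)$ in distribution, so $(G_\infty,s)$ is a distributional limit of finite planar graphs. The same argument applied to the planar duals $\hat{\bG}_n^*=(\hat{G}_n^*,s^*t^*)$ --- which are again finite, $3$-connected and planar, and whose local structure near $s^*$ is determined by that of $\hat{\bG}_n$ near $st$ --- shows that $(G_\infty^*,s^*)$ is likewise a distributional limit, with root $s^*$ degree-biased in the dual (equivalently, face-degree-biased in the primal).

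It then remains to verify the exponential-tail hypothesis: that $\deg_{G_\infty}(s)$ and $\deg_{G_\infty^*}(s^*)$ (the latter being the degree of the face of $G_\infty$ lying to the right of $st$) each have exponential tail. I expect this to be the main obstacle. Both quantities can be read off from the tree encoding: $\bG_\infty$ is the image under Tutte's bijection of $\bQ_\infty$, and $\bQ_\infty$ is the closure of the conditioned critical binary Galton--Watson tree $\bT_\infty$. Under Tutte's bijection the vertices and faces of $G_\infty$ form the two colour classes of the bipartite quadrangulation $\bQ_\infty$, and the degree of such a vertex is controlled by the number of closure edges it receives, which in turn is governed by local subtree data of $\bT_\infty$. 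The task is thus to show that the number of closure edges incident to the vertex playing the role of $s$ (resp.\ $s^*$) has exponential tail under the law of $\bT_\infty$. Since $\bT_\infty$ has bounded offspring (each internal node has exactly two children), I would bound the event that the root has large degree by the event that a bounded-size combinatorial pattern recurs many times in the contour exploration near the root, and use the branching structure of $\bT_\infty$ to show this probability decays geometrically; one must check that the degree-biasing inherent in the stationary rooting multiplies the tail by at most a polynomial factor and so preserves exponentiality.

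With both exponential-tail estimates in hand, Theorem~\ref{thm:limrecur} applies to each of $(G_\infty,s)$ and $(G_\infty^*,s^*)$, yielding that $G_\infty$ and $G_\infty^*$ are each almost surely recurrent. The two delicate points to state carefully are the identification of rooted-map uniformity with uniform directed-edge (hence stationary) rooting, and the transfer of edge-rooted local weak convergence to the sequence of duals; both are routine but warrant explicit justification.
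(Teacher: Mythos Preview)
Your overall strategy---verify the hypotheses of Theorem~\ref{thm:limrecur} for both $(G_\infty,s)$ and $(G_\infty^*,s^*)$---is exactly the paper's. The differences lie in two sub-steps, and in both you take a longer road than necessary.

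\textbf{Exponential tail.} You propose to extract the exponential tail of $\deg_{G_\infty}(s)$ (and of the root face degree) from the closure operation on $\bT_\infty$, by controlling how many closure edges can land on the root. This is plausible but is genuinely nontrivial: the closure can concentrate many identifications at a single vertex, and translating the branching structure of $\bT_\infty$ into a geometric bound on that count requires a careful argument you have only sketched. The paper bypasses this entirely: the exponential tail of $\deg_{G_\infty}(s)$ is Fact~\ref{fact:expvert}, obtained directly from the Bender--Canfield face-size asymptotics for $3$-connected maps, which give $\prob(\deg_{\hat G_n}(\hat s_n)=d)\le B(1/2+\epsilon)^d$ uniformly in $n$. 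So the ``main obstacle'' you identify is already a known result in the enumerative literature, and your tree-based route, while likely workable, is unnecessary.

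\textbf{The dual.} You propose to run the whole argument a second time for $G_\infty^*$, including a separate exponential-tail estimate for the root face degree. The paper does something much cleaner: since the dual of a $3$-connected edge-rooted planar map is again $3$-connected edge-rooted, $\hat\bG_n$ and $\hat\bG_n^*$ have the \emph{same law}. Hence $\hat\bG_n^*\to\bG_\infty^*$ in distribution follows immediately from $\hat\bG_n\to\bG_\infty$, and the same Fact~\ref{fact:expvert} furnishes the tail bound. Your approach would work, but the self-duality observation collapses it to a single sentence.

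In summary: your proof is correct in outline and would succeed if the tree-based tail bound were completed, but the paper's use of Bender--Canfield and of self-duality makes both delicate steps trivial.
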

In proving Theorem~\ref{thm:ginfinity_recur}, we use the following fact. 
\begin{fact}\label{fact:expvert}
The random variable $\deg_{\bG_{\infty}}(s)$ has exponential tail. 
\end{fact}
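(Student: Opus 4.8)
The plan is to transport the degree of $s$ in $\bG_\infty$ first to the quadrangulation $\bQ_\infty$ and then to the tree $\bT_\infty$, and to read off an exponential tail from a local excursion functional of the Kesten tree $\bT_\infty$.

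First I would use Tutte's bijection to pass to the quadrangulation. Recall that $\bG_\infty$ is the image of $\bQ_\infty$ under (the inverse of) Tutte's bijection, and that under this bijection the faces of a map incident to a primal vertex $u$ are in bijection with the edges of the quadrangulation incident to $u$; since for a $2$-connected map the number of faces incident to $u$ equals $\deg(u)$, this gives $\deg_{\bG_\infty}(s)=\deg_{\bQ_\infty}(s)$, where on the right $s$ denotes the corresponding primal vertex. As a root vertex degree is determined by the radius-one ball around the root, this identity is compatible with the a.s.\ local weak convergences $\rQ(\bT_n)\to\bQ_\infty$ and $\bG_n\to\bG_\infty$, so it suffices to bound the tail of $\deg_{\bQ_\infty}(s)$. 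Next I would express $\deg_{\bQ_\infty}(s)$ through the closure of $\bT_\infty$: a vertex of $\bQ_\infty$ is a class of vertices of $\bT_\infty$ identified during the closure (a base internal vertex together with the leaves successively merged into it), so its degree is at most its tree degree (at most three) plus the number of closure edges incident to the class, i.e.\ plus the number $M$ of leaves absorbed into the class of $s$. Thus it is enough to show that $M$ has an exponential tail.

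The crux is the tail bound on $M$, and here I would exploit the spine decomposition of $\bT_\infty$ (one infinite spine carrying i.i.d.\ critical $\mu$-Galton--Watson subtrees). Tracking the clockwise contour, the closure matches each leaf to an internal vertex a bounded number of contour-steps ahead, and the leaves merged into a fixed class are exactly those whose matching ``closes off'' at the level of $s$ in the nesting produced by this LIFO-type matching. The key point is that the infinite spine acts as a barrier: the contour eventually enters the spine subtree and never returns to $s$, so only finitely many leaves can merge into $s$, and each additional merge requires an independent local configuration (a finite excursion that closes off before the exploration commits to the spine) whose probability is bounded below $1$. I would make this precise to stochastically dominate $M$ by a geometric random variable, yielding the required exponential tail; combined with the reductions above this proves the Fact.

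The main obstacle is precisely this last step. Criticality of the hanging $\mu$-Galton--Watson subtrees makes the total number of their leaves heavy-tailed, so the exponential tail cannot come from a crude ``count all leaves below $s$'' bound: it must come from the nested, bounded look-ahead structure of the matching (which sends most leaves to deep interior vertices rather than to $s$) together with the transience supplied by the spine. Formalising the nesting/excursion correspondence for the \emph{iterative} closure --- in which a merged vertex becomes internal and may itself absorb further leaves --- and verifying the uniform lower bound on the ``escape'' probability, is the delicate part. As a robustness check one could instead argue via Theorem~\ref{thm:distlim}: the law of $\deg_{\bG_\infty}(s)$ is the distributional limit of the root vertex degree of a uniform $3$-connected map with $n+4$ edges, which by Tutte's bijection is a degree-size-biased vertex degree in a uniform irreducible quadrangulation, and a uniform-in-$n$ exponential tail for the latter would pass to the limit to give the Fact as well.
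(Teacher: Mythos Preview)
Your primary route --- transporting the degree through Tutte's bijection to $\bQ_\infty$ and then analysing the number $M$ of leaves absorbed into the root class under the closure of $\bT_\infty$ --- is not wrong in spirit, but as written it has a genuine gap at exactly the point you flag. You assert that ``each additional merge requires an independent local configuration whose probability is bounded below $1$'' and that this yields a geometric domination for $M$, but you do not establish either the independence or the uniform escape bound. The closure is iterative (merged leaves become internal and can themselves absorb), so the successive absorption events are not obviously independent, and the barrier role of the spine must be quantified rather than invoked. Without this, the argument reduces to a heuristic; you yourself note that the crude count of all leaves below $s$ fails by heavy tails, so the whole weight of the proof rests on the step you leave open.

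The paper does not attempt this combinatorial route at all. Its proof is precisely your ``robustness check'': by Theorem~\ref{thm:distlim}, $\deg_{\bG_\infty}(s)$ is the distributional limit of the root degree in a uniform $3$-connected rooted map with $n+4$ edges, and then a uniform-in-$n$ exponential bound on that root degree is quoted directly from Bender and Canfield \cite{bender89face} (their Theorem~2.1(a)), which gives $\prob(\deg_{\hat{\bG}_n}(\hat s_n)=d)<B(1/2+\epsilon)^d$ for all $n$. Passing to the limit finishes the proof in two lines. So the missing ingredient in your alternative is simply the citation; once you supply it, that route is complete and is the one the paper takes. Your tree approach, if it could be made rigorous, would be self-contained and avoid the external enumeration input, but as it stands it is not a proof.
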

\begin{proof}
Let $\hat{\bG}_n=(\hat{G}_n,\hat{s}_n\hat{t}_n)$ be as in Theorem~\ref{thm:distlim}. Then for $d \in \mathbb{N}$, $\prob(\deg_{\bG_{\infty}}(s) =d) = \lim_{n \to \infty} \prob(\deg_{\hat{\bG}_n}(\hat{s}_n) =d)$. 
By \citep[Theorem 2.1 (a)]{bender89face}, for all $\epsilon > 0$ there exists $B>0$ such that for all $n \in \mathbb{N}$, 
\[ 
\prob(\deg_{\hat{\bG}_n}(\hat{s}_n) =d) < B\cdot\left ( \frac{1}{2} + \epsilon \right)^d.
\]
The fact follows. 
\end{proof}
\begin{proof}[Proof of Theorem~\ref{thm:ginfinity_recur}]
By Theorem~\ref{thm:distlim}, $(G_{\infty},s)$ is a distributional limit of finite planar graphs; by Fact~\ref{fact:expvert} its root degree has exponential tail. The a.s.\ recurrence of $G_{\infty}$ then follows from Theorem~\ref{thm:limrecur}. Next, 
let $\hat{\bG}_n$ be as in Theorem~\ref{thm:distlim}. Then $\hat{\bG}_n$ has the same law as its planar dual $\hat{\bG}^*_n$, and the latter converges in distribution to $\bG^*_{\infty}$, so $G_{\infty}^*$ is likewise a.s.\ recurrent. 
\end{proof}

Now for $1 \le n \le \infty$, let $S_n=\cS(\bG_n)$ be the squaring associated to $\bG_n$. In the following section, we prove the first part of Theorem~\ref{thm:main} by showing that $S_n \to S_{\infty}$ almost surely, for the Hausdoff distance, as $n \to \infty$. 

\section{\large {\bf Convergence of the squarings} }\label{sec:convergence}

Given a graph $G$, recall that a function $\varphi:v(G) \to \mathbb{R}$ is called \emph{harmonic} with boundary $D \subset v(G)$ if for all $v \in v(G)\backslash D$,
\[
\varphi(v) = \frac{1}{\deg(v)}\sum_{w \sim v} \varphi(w). 
\]
Let $\mathrm{X}=(X_n, n\geq 0)$ be a simple random walk on $G$. For any set $A \subset v(G)$, let 
\[
\tau_A =\tau_A(\mathrm{X}) = \inf \{n \in \mathbb{N}: X_n \in A \}
\]
be the first hitting time of $A$ by the walk. We recall the following standard theorem relating harmonic functions and simple random walks; see, e.g., \citep[Section 4.2]{norris98markov}. 
\begin{theorem}\label{thm:harmrw}
Let $G$ be a recurrent graph and $\varphi:v(G) \to \mathbb{R}$ be harmonic with finite boundary $D$. 
Then for all $v \in v(G)\setminus D$, $\varphi(v)=\mathbb{E}(\varphi(X_{\tau_D})~|~X_0=v)$.
\end{theorem}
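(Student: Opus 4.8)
The plan is to prove the identity $\varphi(v) = \expe(\varphi(X_{\tau_D}) \mid X_0 = v)$ by exhibiting both sides as harmonic functions with the same boundary values on $D$, and then appealing to a uniqueness principle that holds because $G$ is recurrent. Concretely, define $\psi(v) := \expe(\varphi(X_{\tau_D}) \mid X_0 = v)$. Since $D$ is finite and $G$ is recurrent, the walk started from any $v$ hits $D$ almost surely (a finite set is hit a.s.\ by a recurrent walk, so $\tau_D < \infty$ a.s.), and on $D$ itself we have $\tau_D = 0$, so $\psi|_D = \varphi|_D$. A one-step (first-transition) decomposition of the walk shows $\psi$ is harmonic off $D$: conditioning on the first step $X_1 = w$ for $v \notin D$ gives $\psi(v) = \frac{1}{\deg(v)}\sum_{w \sim v}\psi(w)$, using the Markov property and the fact that $\tau_D \ge 1$ when starting off $D$. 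Thus $\psi$ and $\varphi$ are two functions, both harmonic on $v(G)\setminus D$, agreeing on $D$.

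The crux is then the \emph{uniqueness} of the harmonic extension, i.e.\ showing that a function $h := \varphi - \psi$ which is harmonic off $D$ and vanishes on $D$ must be identically zero. This is exactly where recurrence of $G$ enters and is the main obstacle, since on a transient graph uniqueness fails (one can have nonzero bounded harmonic functions vanishing on a finite set). First I would observe that $h$ is harmonic on all of $v(G)\setminus D$ and $h|_D = 0$; I want to conclude $h \equiv 0$. The cleanest route is probabilistic: consider $M_k := h(X_{k \wedge \tau_D})$, the walk stopped on hitting $D$. Harmonicity of $h$ off $D$ makes $(M_k)$ a martingale. Because $G$ is recurrent and $D$ is finite, $\tau_D < \infty$ a.s., so $M_{k\wedge\tau_D} \to h(X_{\tau_D}) = 0$ almost surely as $k \to \infty$.

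The remaining technical point, and the part deserving genuine care, is passing from the martingale property and a.s.\ convergence to $h(v) = \lim_k \expe(M_k \mid X_0 = v) = 0$; optional stopping is not automatic for an unbounded stopping time without a domination or integrability hypothesis. I would handle this by a truncation or localization argument: for the function $\psi$ the identity is immediate because $\varphi(X_{\tau_D})$ takes only finitely many values (as $D$ is finite and $\varphi$ is fixed), so it is bounded and $\psi$ is well-defined and bounded on the range of boundary values. To obtain uniqueness I would argue on the difference $h$: one cannot assume $h$ bounded a priori, so the right move is to avoid the difference entirely and instead directly verify the martingale optional-stopping step for $M_k = \varphi(X_{k\wedge\tau_D})$ where $\varphi$ is the given harmonic function. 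Here $\varphi$ need not be bounded on all of $G$, but one only ever evaluates the limiting value on $D$, and before $\tau_D$ the stopped walk explores finitely many vertices with the right conditional-expectation structure; applying the bounded optional stopping theorem on the stopped walk and then taking $k \to \infty$ using recurrence ($\tau_D < \infty$ a.s.) and uniform integrability of $(M_k)$ (which follows once one checks $M_k$ converges in $L^1$, e.g.\ via the dominated values on $D$) yields $\varphi(v) = \expe(\varphi(X_{\tau_D}) \mid X_0 = v)$ directly. The delicate step is thus the integrability/uniform-integrability justification for optional stopping at the unbounded time $\tau_D$; I expect this to be the only place where more than a routine calculation is required, and I would resolve it by invoking the standard optional stopping theorem for recurrent chains with finite target set, citing \citep[Section 4.2]{norris98markov} for the precise statement.
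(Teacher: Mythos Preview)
The paper does not prove this theorem; it simply records it as standard and cites \citep[Section 4.2]{norris98markov}, exactly the reference you invoke at the end. So there is no ``paper's own proof'' to compare against, and your sketch is essentially the textbook argument found there.

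That said, you have correctly put your finger on the one genuinely delicate point, but your proposed resolution does not work, and in fact cannot work as the statement is literally written. The theorem as stated is false without a boundedness (or similar) hypothesis on $\varphi$: take $G=\mathbb{Z}$, $D=\{0\}$, and $\varphi(n)=n$. This $\varphi$ is harmonic off $D$ with $\varphi(0)=0$, the walk is recurrent so $\tau_D<\infty$ a.s., and $\expe(\varphi(X_{\tau_D})\mid X_0=n)=0\ne n$. Your claim that uniform integrability of $M_k=\varphi(X_{k\wedge\tau_D})$ ``follows \ldots\ via the dominated values on $D$'' is exactly where this breaks: the values of $\varphi$ on $D$ do not dominate the values of $\varphi$ along the walk before it hits $D$, and in the example above $(M_k)$ is a martingale converging a.s.\ to $0$ but with $\expe M_k = n$ for all $k$.

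In the paper's applications (Theorem~\ref{thm:harmconv} and Proposition~\ref{prop:conv_squaring}) the harmonic functions are potentials taking values in $[0,1]$, so boundedness is available and the optional stopping step goes through by bounded convergence; Norris's statement likewise carries a boundedness assumption. Your argument is fine once you add that hypothesis, and the martingale route you outline is then the standard one. The only correction needed is to recognise that the integrability issue you flagged is not a technicality to be finessed but a genuine obstruction requiring an extra assumption that the paper's phrasing omits.
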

In order to prove convergence of the squarings $S_n$ to $S_{\infty}$, we naturally require the potential at each vertex of $G_n$ to converge to its limiting value in $G_{\infty}$. We prove a slightly more general theorem from which such a convergence will follow. 
\begin{theorem}\label{thm:harmconv}
Let $(\rH_n,1 \le n \le \infty)=((H_n,\rho_n),1 \le n \le \infty)$ be a sequence of rooted graphs such that $\rH_n \to \rH_{\infty}$ in the local weak sense, and such that $H_{\infty}$ is recurrent. Fix $D \subset v(H_{\infty})$ finite, and for each $n \le \infty$ large enough that $D \subset v(H_n)$, let $\varphi_n: v(H_n) \to \mathbb{R}$ be a harmonic function on $H_n$ with boundary $D$. Suppose further that the functions $\varphi_n$ all agree on $D$. Then for all $v \in v(G_{\infty})$, $\varphi_n(v) \to \varphi_{\infty}(v)$ as $n \to \infty$. 
\end{theorem}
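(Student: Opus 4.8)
The plan is to reduce the statement to the convergence of the hitting distributions of $D$ by simple random walk, and then to obtain that convergence by coupling the walks on $H_n$ and $H_\infty$ inside a large ball. Write $g=\varphi_\infty|_D=\varphi_n|_D$ for the common boundary values and set $M=\max_{d\in D}|g(d)|$. Since $H_\infty$ is recurrent, Theorem~\ref{thm:harmrw} gives the representation $\varphi_\infty(v)=\expe(g(X_{\tau_D})\mid X_0=v)$ for the walk $X$ on $H_\infty$; in particular $\varphi_\infty(v)$ is a weighted average of boundary values, so $\|\varphi_\infty\|_\infty\le M$. For each finite (more generally, recurrent) $H_n$ the walk reaches $D$ almost surely as well, so the same representation and the bound $\|\varphi_n\|_\infty\le M$ hold. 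Fixing $v\in v(H_\infty)$ and $\epsilon>0$, it then suffices to show $|\varphi_n(v)-\varphi_\infty(v)|\le 2M\epsilon$ for all large $n$.

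Next I would fix a radius using recurrence. Let $\tau_R$ denote the first time the relevant walk reaches graph distance $R$ from the root. On $H_\infty$ the events $\{\tau_D<\tau_R\}$ increase to $\{\tau_D<\infty\}$ as $R\to\infty$, so recurrence of $H_\infty$ yields $\prob(\tau_D<\tau_R\mid X_0=v)\ge 1-\epsilon$ once $R$ is large; I also take $R$ large enough that $v$ and $D$ lie in $B_{H_\infty}(\rho_\infty,R)$. With $R$ now fixed, local weak convergence gives, for all large $n$, a rooted isomorphism $B_{H_n}(\rho_n,R)\cong B_{H_\infty}(\rho_\infty,R)$ carrying the copies of $v$ and $D$ to one another. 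Because every vertex at distance at most $R-1$ from the root has all of its neighbours inside the ball, the simple-random-walk step distributions agree across this isomorphism at such vertices, so I can couple the two walks to follow identical trajectories (under the isomorphism) until the first time either reaches distance $R$, that is, up to time $\tau_R$.

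Finally I would combine these ingredients. Applying optional stopping to the bounded martingale $(\varphi_\bullet(X_{k\wedge\tau_D}))_{k\ge 0}$ at the almost surely finite time $\tau_D\wedge\tau_R$ (the walk stays in the finite ball, where $\varphi_\bullet$ is bounded), I obtain on each graph, for the walk started at $v$,
\[
\varphi_\bullet(v)=\expe\big(g(X_{\tau_D})\mathbf{1}_{\{\tau_D<\tau_R\}}\big)+\expe\big(\varphi_\bullet(X_{\tau_R})\mathbf{1}_{\{\tau_R\le\tau_D\}}\big).
\]
Under the coupling the event $\{\tau_D<\tau_R\}$ and the hitting vertex $X_{\tau_D}\in D$ coincide for the two walks, so the first terms are exactly equal; the residual terms are each bounded in absolute value by $M\cdot\prob(\tau_R\le\tau_D\mid X_0=v)\le M\epsilon$, using $\|\varphi_n\|_\infty,\|\varphi_\infty\|_\infty\le M$ and (under the coupling) the equality of the two escape probabilities. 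Hence $|\varphi_n(v)-\varphi_\infty(v)|\le 2M\epsilon$, and letting $\epsilon\to 0$ completes the proof. The main obstacle is the escape term on the finite graphs: its control hinges on a uniform bound for the $\varphi_n$ away from $D$, which is exactly what the random-walk representation (and hence the fact that the walk on $H_n$ reaches $D$, automatic when $H_n$ is finite) supplies; without such a bound the contribution of the low-probability event on which the walk wanders out to distance $R$ could not be dismissed.
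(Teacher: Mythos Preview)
Your argument is correct and follows essentially the same route as the paper's proof: reduce to hitting distributions via Theorem~\ref{thm:harmrw}, use recurrence of $H_\infty$ to make the escape-from-a-large-ball probability small, and use local weak convergence to identify the walks inside that ball. Your optional-stopping decomposition and explicit coupling are a mild repackaging of the paper's direct comparison of the probabilities $\prob(E_n(r)^c,\,X^{(n)}_{\tau_D}=u)$ with their $n=\infty$ counterparts; the underlying mechanism is identical.

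One remark that applies equally to both arguments: the step bounding the ``escape'' term relies on $\|\varphi_n\|_\infty\le M$, which in turn uses the random-walk representation of $\varphi_n$ on $H_n$. As you note, this needs the walk on $H_n$ to hit $D$ almost surely, i.e.\ $H_n$ recurrent (in particular, finite). The theorem statement does not literally assume this, and the paper's proof makes the same tacit assumption when it invokes Theorem~\ref{thm:harmrw} for $\varphi_n$; in every application in the paper (Proposition~\ref{prop:conv_squaring}) the $H_n$ are indeed recurrent, so no harm is done.
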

\begin{proof}
Fix $v \in v(H_{\infty})$, and let $n_0$ be large enough that $D\cup \{v\} \subset v(H_n)$ for all for $n \ge n_0$. For $n_0 \le n \le \infty$ write $\mathrm{X}^{(n)}=(X_k^{(n)},k \ge 0)$ for simple random walk on $H_n$ started from $X^{(n)}_0=v$. In view of Theorem \ref{thm:harmrw} and finiteness of $D$, it is enough to show that for each $b \in D$, $\prob(X^{(n)}_{\tau_D(\mathrm{X}^{(n)})} = b) \to \prob(X^{(\infty)}_{\tau_D(\mathrm{X}^{(\infty)})} = b)$ as $n \to \infty$. In what follows we write, e.g., $\tau_D$ instead of $\tau_D(\mathrm{X}^{(n)})$ for readability; the omitted argument should be clear from context.

Let $E_n(r) = \{\tau_{B_{H_n}(v,r)^c} < \tau_D\}$ be the event that $\mathrm{X}^{(n)}$ reaches distance $r$ from $v$ before hitting $D$.  Since $H_\infty$ is recurrent, 
$\prob(E_{\infty}(r)) \to 0$ as $r \to \infty$. 

Next, fix $r_0$ large enough that $D \subset B_{H_\infty}(v,r_0)$. 
For $r\ge r_0$, by taking $N=N(r)$ large enough that $B_{H_n}(v,r+1)$ is constant for $n > N$, for such $n$ and for all $u \in D$ we have 
\[
\prob(E_{\infty}(r),X^{(n)}_{\tau_D} = u)=\prob(E_{n}(r),X^{(\infty)}_{\tau_D} = u).
\]
Summing over $u \in D$, this also implies that for such $n$, $\prob(E_{n}(r))=\prob(E_{\infty}(r))$. 

Now let $\epsilon > 0$ be arbitrary, fix $R> r_0$ large enough that $\prob_\infty(E(R)) < \epsilon$, and let $N=N(R)$ be as above. Then for $u \in D$, for $n > N$,
\begin{align*}
\prob(X^{(n)}_{\tau_D} = u)
&= \prob(E_{\infty}(r)^c,X^{(n)}_{\tau_D} = u) + \prob(E_{\infty}(v,r),X^{(n)}_{\tau_D} = u)\\
&<\prob(E_{\infty}(r)^c,X^{(n)}_{\tau_D} = u) + \epsilon\\
&= \prob(E_{n}(r)^c,X^{(\infty)}_{\tau_D} = u) + \epsilon\\
& \leq \prob(X^{(\infty)}_{\tau_D} = u) + \epsilon
\end{align*}
A symmetric argument shows that 
$\prob(X^{(n)}_{\tau_D} = u) > \prob(X^{(\infty)}_{\tau_D} = u) - \epsilon$, and thus 
\[
|\prob_n(X^{(n)}_{\tau_n} = \varphi(u)) - \prob_\infty(X_{\tau} = \varphi(u))|<\epsilon. 
\]
\end{proof}
\begin{proposition}\label{prop:conv_squaring}
Let $(\rH_n,1 \le n \le \infty)=((H_n,st),1 \le n \le \infty)$ be a sequence of locally finite, edge-rooted recurrent planar maps such that $H_{n}-\{s,t\}$ is connected for all $1 \le n \le \infty$ and such that $\rH_n \to H_{\infty}$ in the local weak sense. Then $\cS(\rH_{\infty})$ is a squaring of a rectangle, and $\cS(\rH_n) \to \cS(\rH_{\infty})$ as $n \to \infty$, for the Hausdorff distance. 
\end{proposition}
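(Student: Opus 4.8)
The plan is to treat $\cS(\rH_n)$ as an explicit, continuous function of the vertex potentials: for $e=\{u,v\}$ the square $s_e$ has side length equal to the current $\theta_n(e)=|P_n(u)-P_n(v)|$, top edge at height $y_n(e)=\max(P_n(u),P_n(v))$, and left edge at abscissa $x_n(e^*)=\min(P_n^*(u^*),P_n^*(v^*))$, where $P_n$ and $P_n^*$ are the primal and dual potentials. So the first job is pointwise convergence of potentials. The function $P_n$ is harmonic on $H_n$ with boundary $D=\{s,t\}$ and common boundary values $P_n(s)=1,\ P_n(t)=0$; since $H_\infty$ is recurrent and $\rH_n\to\rH_\infty$ locally, Theorem~\ref{thm:harmconv} gives $P_n(v)\to P_\infty(v)$ for every $v\in v(H_\infty)$. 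As the neighbourhood of the root edge stabilises under local weak convergence, identity~(\ref{eq:cursum}) then yields $\lambda(\rH_n)\to\lambda(\rH_\infty)$, a limit which is finite and strictly positive because the effective resistance between $s$ and $t$ is finite (there is an $s$--$t$ path) and nonzero. Applying the same reasoning to the dual network $\rH_n^*$ --- whose normalised potentials $P_n^*/\lambda(\rH_n)$ are harmonic with boundary values $1$ and $0$ at $s^*,t^*$, and which is recurrent as well (as holds in our applications) with $\rH_n^*\to\rH_\infty^*$ --- gives $P_n^*\to P_\infty^*$ pointwise; by the rotational symmetry relating $\cS(\rH^*)$ to $\cS(\rH)$ this is the exact mirror of the primal case. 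Consequently each fixed square $s_e^{(n)}$ converges in Hausdorff distance to $s_e^{(\infty)}$, uniformly over any finite edge set.

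Next I would show the limit is genuinely a squaring, proving this for an arbitrary recurrent $\rH$ (which also covers every $\rH_n$). Exhaust $H$ by the finite balls $B_H(s,k)$: these are finite, hence recurrent, converge locally to $H$, and (for $k$ large, after passing to the component of the root) satisfy the connectivity hypothesis, so Theorem~\ref{thm:bsst} makes each $\cS(B_H(s,k))$ a squaring with pairwise disjoint square interiors. By the first paragraph the squares converge, and disjointness of interiors passes to the limit: two nondegenerate limit squares with overlapping interiors would contain a common closed ball, which would then lie in the overlapping interiors at level $k$ for large $k$, a contradiction. The total area of the limit squares equals $\sum_e\theta_\infty(e)^2$, which is the energy of the unit current flow and hence, by Ohm's law, equals $\lambda(\rH)$. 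Since all squares lie in $[0,\lambda(\rH)]\times[0,1]$ with disjoint interiors and total area equal to that of the rectangle, the closed squares cover the rectangle, and every bounded complementary component is forced to coincide with the interior of one square. Thus $\cS(\rH)$ is a squaring of $[0,\lambda(\rH)]\times[0,1]$.

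For the Hausdorff convergence, fix $\epsilon>0$ and split the edges of $H_\infty$ (and of $H_n$) according to graph distance from the root, writing $B(r)$ for the edges within distance $r$. Finiteness of the flow energy gives $\sum_{e\notin B(r)}\theta_\infty(e)^2\to0$ as $r\to\infty$; combined with $\lambda(\rH_n)\to\lambda(\rH_\infty)$ and the convergence of currents on the finite set $B(r)$, this produces a uniform tail bound $\sum_{e\notin B(r),\,e\in H_n}\theta_n(e)^2\le 2\epsilon_r$ for all large $n$, with $\epsilon_r\to0$. Hence every square of $\cS(\rH_n)$ outside $B(r)$ has side at most $\sqrt{2\epsilon_r}$ and these squares have small total area, while for $e\in B(r)$ the squares of $\cS(\rH_n)$ match those of $\cS(\rH_\infty)$ up to $\epsilon$.

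The crux --- and the step I expect to be the main obstacle --- is to rule out that a "far" or newly appeared small square of $\cS(\rH_n)$ sits far from $\cS(\rH_\infty)$, for instance deep inside one of its large squares; smallness of total area alone does not preclude this, since Hausdorff distance is not controlled by area. To handle it I would exploit that \emph{both} $\cS(\rH_n)$ and $\cS(\rH_\infty)$ are genuine squarings. Since the large squares ($e\in B(r)$) of the two squarings nearly coincide, and since within each squaring the small squares have interiors disjoint from the large ones, the small squares of $\cS(\rH_n)$ are confined to the region left uncovered by the large squares of $\cS(\rH_\infty)$ --- precisely the region finely tiled by the small squares of $\cS(\rH_\infty)$, every point of which lies within $\sqrt{2\epsilon_r}$ of $\cS(\rH_\infty)$. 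Running this confinement argument in both directions bounds the Hausdorff distance between $\cS(\rH_n)$ and $\cS(\rH_\infty)$ by a quantity tending to $0$, for $n$ large, which completes the proof.
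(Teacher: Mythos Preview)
Your argument follows essentially the same route as the paper: pointwise convergence of primal and dual potentials via Theorem~\ref{thm:harmconv}, convergence of $\lambda$ via (\ref{eq:cursum}), disjointness of interiors in the limit together with the energy identity to see that $\cS(\rH_\infty)$ is a genuine squaring, then Hausdorff convergence. The paper declares the last step ``immediate'' once both objects are squarings and individual squares converge, whereas your final two paragraphs spell out the confinement argument for far squares; you also correctly flag that recurrence of the dual is needed to apply Theorem~\ref{thm:harmconv} to $P_n^*$, which the paper uses tacitly (it is supplied in the intended application by Theorem~\ref{thm:ginfinity_recur}).
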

\begin{proof}
First assume that $H_n$ is finite for $1 \le n < \infty$. 
Let $\lambda_n$ be the total current flowing through $\rH_n$, and let $y_n : v(H_n) \to \mathbb{R}$ be the potential on $\rH_n$. Then let $x_n: v(H^*_n) \to \mathbb{R}$ be the potential on the dual graph $\rH^*_n$ when a potential of $\lambda_n$ is applied at $t^*$ and the graph is grounded at $s^*$. Now fix an edge $e=\{u,v\}$ of $H_{\infty}$. Then for $n$ sufficiently large that $e \in e(H_n)$, the square corresponding to $e$ in $\cS(H_n)$ is bounded by the horizontal lines with $y$-coordinates $y_n(u)$ and $y_n(v)$, and the vertical lines with $x$-coordinates $x_n(u^*)$ and $x_n(v^*)$. 
Since $y_n$ is harmonic with boundary $y_n(s) = 1$ and $y_n(t) = 0$, by Theorem~\ref{thm:harmconv}, $y_n$ converges pointwise. Furthermore, recall from (\ref{eq:cursum}) that 
\[
\lambda_n = \deg_n(s) -1 - \sum_{w \sim s} y_n(w)
\]
which in particular implies that $\lambda_n$ converges. 

Now let $\hat{x}_n:v(H_n^*) \to \mathbb{R}$ be harmonic on $H_n^*$ with boundary $\hat{x}_n(t^*) = 1$ and $\hat{x}_n(s^*) = 0$. By uniqueness and linearity of harmonic functions, $x_n = \lambda_n \hat{x}_n$. But $\hat{x}_n$ converges by Theorem~\ref{thm:harmconv}, so the same is true of $x_n$. It follows that square positions and sizes converge to their limiting values. For all $n$, the interiors of squares of $\cS(\rH_n)$ are pairwise disjoint; since the position and size of each square converges, the same must hold in $\cS(\rH_{\infty})$. 

By its definition, $\cS(H_n)$ is contained within $[0,\lambda_n] \times [0,1]$ for each $1 \le n \le \infty$. 
Since $\lambda_n \to \lambda_{\infty}$, the squarings $(\cS(\rH_n),1 \le n \le \infty)$ are uniformly bounded in $\mathbb{R}^2$. Furthermore, it is immediate from the energy formulation of resistance (see \citep[Proposition 9.2]{LyPe}) that $\lambda_\infty$ is precisely the sum of the areas of the squares $\{s_e, e \in e(H_{\infty})\}$.
Since these squares have disjoint interiors, they must therefore tile $[0,\lambda_{\infty}]\times[0,1]$; in other words, $\cS(\bG_\infty)$ is a squaring of $[0,\lambda_n]\times[0,1]$. It is then immediate that $\cS(\rH_n) \to \cS(\rH_{\infty})$ in the Hausdorff sense. 

We now allow that $\rH_n$ is infinite for $1 \le n < \infty$. View $\rH_n$ as a local weak limit of finite graphs; then the preceding case shows that $\cS(\rH_n)$ is a squaring of a rectangle for each $n$, and a reprise of the above arguments then shows that $\cS(\rH_n) \to \cS(\rH_{\infty})$ in the Hausdoff sense. 
\end{proof}
\begin{corollary}\label{prop:thm1part}
The squarings $S_n$ converge almost surely to a squaring $S_{\infty}$ as $n \to \infty$, for the Hausdorff distance, and $S_{\infty}=\cS(G_{\infty})$. Furthermore, $S_{\infty}$ has infinitely many squares of positive area. Finally, for all vertices $v$ (resp.\ faces $f$) of $G_{\infty}$, $\ell(v)$ (resp.\ $\ell(f)$) is non-degenerate. 
\end{corollary}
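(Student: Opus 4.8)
The plan is to derive all three assertions from the apparatus already in place, feeding the growth sequence $(\bG_n,1\le n\le\infty)$ into Proposition~\ref{prop:conv_squaring}. It is convenient to reorder the three claims: after the convergence I will establish non-degeneracy of the lines, and then the infinitude of positive-area squares will drop out of non-degeneracy.

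First I would prove the Hausdorff convergence and the identification $S_\infty=\cS(G_\infty)$ by checking, on an almost sure event, that $(\bG_n,1\le n\le\infty)$ meets the hypotheses of Proposition~\ref{prop:conv_squaring}. The maps $\bG_n$ are planar and locally finite; each finite $\bG_n$ is recurrent trivially, while $\bG_\infty$ is recurrent almost surely by Theorem~\ref{thm:ginfinity_recur}. The local weak convergence $\bG_n\to\bG_\infty$ holds almost surely by the construction of the growth procedure recalled in Section~\ref{ssec:growth}. The connectivity requirement that $H_n-\{s,t\}$ be connected holds for finite $n$ as part of the construction that makes $S_n=\cS(\bG_n)$ a squaring, and for $n=\infty$ it is immediate from the almost sure $3$-connectivity of $G_\infty$ (a consequence of Theorem~\ref{thm:distlim}). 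On the full-measure event where all of these hold, Proposition~\ref{prop:conv_squaring} yields that $\cS(G_\infty)$ is a squaring of a rectangle and that $S_n=\cS(\bG_n)\to\cS(G_\infty)$ for the Hausdorff distance, so $S_\infty=\cS(G_\infty)$.

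Next I would obtain the non-degeneracy of all primal and facial lines by transporting the proof of Lemma~\ref{lem:3con_nondegen} to the infinite setting. That proof uses only that the map is $3$-connected, that $\cS$ is a genuine squaring with pairwise interior-disjoint squares, and that distinct primal/facial lines meet in at most a point — all valid for $G_\infty$ almost surely by the previous paragraph together with the a.s.\ $3$-connectivity of $G_\infty$. Concretely, if some $\ell(w)$ were a single point $z$, the non-degenerate vertices whose lines pass through $z$ would form a set $U$ of size at most two separating $w$ from $\{s,t\}$, contradicting $3$-connectivity; this argument does not care whether the separated set of degenerate vertices is finite or infinite, so it applies to the infinite map $G_\infty$ verbatim. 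For the facial lines I would apply the identical reasoning to the dual $G_\infty^*$, which is itself $3$-connected and recurrent (Theorem~\ref{thm:ginfinity_recur}) and whose squaring is a rotation of $\cS(G_\infty)$, hence again a genuine squaring; this gives non-degeneracy of every $\ell(f)$.

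The infinitude of positive-area squares then follows. Almost surely $G_\infty$ has infinitely many vertices, being the local weak limit of the growth procedure built from the infinite tree $\bT_\infty$, and by the previous paragraph each vertex $v$ is non-degenerate, hence borders at least one square of positive area (an incident edge carrying nonzero current). Since every square borders exactly two primal lines, only finitely many positive-area squares could touch only finitely many vertices, so only finitely many vertices could be non-degenerate; this contradicts the non-degeneracy of all (infinitely many) vertices, and therefore infinitely many squares have positive area. The step I expect to require the most care is the first one: marrying the almost sure recurrence supplied by Theorem~\ref{thm:ginfinity_recur} and the almost sure local weak convergence with the deterministic statement of Proposition~\ref{prop:conv_squaring}, confirming the connectivity hypothesis throughout, and verifying that the finite-graph cutset argument of Lemma~\ref{lem:3con_nondegen} survives the passage to the infinite map $G_\infty$.
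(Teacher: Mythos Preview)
Your proposal is correct and follows essentially the same route as the paper: invoke Proposition~\ref{prop:conv_squaring} on the growth sequence (using the a.s.\ recurrence from Theorem~\ref{thm:ginfinity_recur} and the a.s.\ local weak convergence from Section~\ref{ssec:growth}), then rerun the cutset argument of Lemma~\ref{lem:3con_nondegen} using the a.s.\ $3$-connectivity of $G_\infty$, and finally deduce infinitely many positive-area squares from the fact that each square borders only two primal lines. Your treatment is slightly more explicit than the paper's in handling the facial lines via duality and in checking the connectivity hypothesis of Proposition~\ref{prop:conv_squaring}, but the substance is the same.
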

\begin{proof}
In view of Proposition~\ref{prop:conv_squaring}, the convergence is immediate from the a.s\ convergence of $\bG_n$ to $\bG_{\infty}$ described in Section~\ref{ssec:growth}, and the a.s.\ recurrence of $\bG_\infty$ from Theorem~\ref{thm:ginfinity_recur}. Next, Theorem~\ref{thm:distlim} implies that $G_{\infty}$ is a.s.\ $3$-connected. It follows by the same argument as for Lemma~\ref{lem:3con_nondegen} that the lines $\ell(v)$, $v \in v(G_{\infty})$ are all non-degenerate. On the other hand, a non-degenerate line $\ell(v)$ must neighbour a non-degenerate square $s_e$ (in fact, at least $3$ such squares since $G_{\infty}$ is $3$-connected, but we do not need this). Each square borders only two primal lines, so there must be an infinite number of non-degenerate squares. 
\end{proof}

\section{\large {\bf Only one point of accumulation}} \label{sec:onepoint}
To show that $S_{\infty}$ a.s.\ has only one point of accumulation, we use a result of He and Schramm \cite{he95hyperbolic}, which requires a brief introduction. 
A {\em packing} is a collection $\{P_i, i \in I\}$ of measurable subsets of $\mathbb{R}^2$ such that for each $i \in I$, the interior of $P_i$ is disjoint from $\bigcup_{j \in I\setminus \{i\}} P_j$. Its {\em contacts graph} is the graph $R=R(\{P_i, i \in I\})$ with vertices $\{P_i,i \in I\}$ and edges $\{\{P_i,P_j\}: P_i \cap P_j \ne \emptyset\}$. 

A measurable set $A \subset \mathbb{R}^2$ is {\em $\delta$-fat} if for all $x \in B$ and all $r > 0$ with $A \setminus B(x,r) \ne \emptyset$, $\mathrm{Leb}(A \cap B(x,r)) \ge \delta \mathrm{Leb}(B(x,r))$.\footnote{$\mathrm{Leb}$ denotes Lebesgue measure in $\mathbb{R}^2$.} To quote from \cite{he95hyperbolic}, ``a set is fat if if its area is roughly proportional to the square of its diameter, and this property also holds locally". 

A packing $\{P_i,i \in I\}$ is fat if there is $\delta > 0$ such that $P_i$ is $\delta$-fat for all $i \in I$. It is {\em well-separated} if for each $P_i$, the set $\bigcup_{P_j \sim P_i} P_j \setminus P_i$ contains a Jordan curve separating $P_i$ from $\bigcup_{P_j \nsim P_i} P_j \setminus P_i$. 

A graph $G$ is {\em one-ended} if for any finite set $U \subset v(G)$, $G-U$ has exactly one infinite connected component. 
It is {\em edge-parabolic} if there exists a function $m: e(G) \to [0,\infty)$ with $\sum_{e \in e(G)} m(e)^2 < \infty$ such that for any infinite path $\gamma$ in $G$, $\sum_{e \in e(\gamma)} m(e) = \infty$. Likewise, it is {\em vertex-parabolic} if there exists a function $m': v(G) \to [0,\infty)$ with $\sum_{v \in v(G)} m'(v)^2 < \infty$ such that for any infinite path $\gamma$ in $G$, $\sum_{v \in e(\gamma)} m'(v) = \infty$. For locally finite graphs, edge-parabolicity is equivalent to recurrence \cite{duffin62extremal,doyle84rw,he95hyperbolic}, and edge-parabolicity implies (but is not equivalent to) vertex-parabolicity \cite{he95hyperbolic}. 
\begin{theorem}[\cite{he95hyperbolic}, Theorem~6.1]\label{thm:heschramm}
If $(P_i,i \in I)$ is a well-separated fat packing, and its contacts graph $R$ is locally finite, one-ended and vertex-parabolic, then $\bigcup_{i \in I} P_i$ has a single point of accumulation.
\end{theorem}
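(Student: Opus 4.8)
Since the statement is quoted as He and Schramm's theorem, I read the task as sketching how one would prove it. Write $W = \bigcup_{i\in I} P_i$ and let $A$ be its set of accumulation points, i.e.\ those $x$ every neighbourhood of which meets $P_i$ for infinitely many $i$; ``a single point of accumulation'' means $|A|=1$, so the whole argument is an analysis of $A$. The plan is to establish three facts and combine them: (i) $A$ is non-empty and compact; (ii) $A$ is connected; (iii) $A$ is not a non-degenerate continuum. Granting these, if $|A|\ge 2$ then by (ii) $A$ is a compact connected set with at least two points, hence a non-degenerate continuum, contradicting (iii); so $|A|=1$. For (i) I would first reduce to the case that $W$ lies in a bounded region (in the general statement one passes to the sphere, but in the intended application $W$ is a squaring of a rectangle, so this is automatic). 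Then $\sum_i \mathrm{Leb}(P_i)\le \mathrm{Leb}(\text{region})<\infty$, and $\delta$-fatness applied with $x\in P_i$ and $r\uparrow \mathrm{diam}(P_i)$ gives $\mathrm{Leb}(P_i)\ge c\,\mathrm{diam}(P_i)^2$ for a constant $c=c(\delta)>0$; hence $\sum_i \mathrm{diam}(P_i)^2<\infty$. In particular $\mathrm{diam}(P_i)\to 0$ and only finitely many sets exceed any fixed diameter, so (as $I$ is infinite and $W$ bounded) $A$ is non-empty, closed and bounded. This also singles out $m(P_i):=\mathrm{diam}(P_i)$ as a natural square-summable vertex weight, which guides the parabolicity step.

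For (ii) I would argue by contradiction from one-endedness. Suppose $A=A_1\sqcup A_2$ with $A_1,A_2$ non-empty and $\mathrm{dist}(A_1,A_2)=2d>0$. The only accumulation points lie in $A\subset N_d(A_1)\cup N_d(A_2)$, so only finitely many $P_i$ avoid $N_d(A_1)\cup N_d(A_2)$ (otherwise they would accumulate outside $A$); let $F$ collect these together with the finitely many sets of diameter at least $d/2$. A short distance computation then shows that $R-F$ has no edge joining a set meeting $N_{d/2}(A_1)$ to one meeting $N_{d/2}(A_2)$: two touching sets of diameter $<d/2$ cannot bridge a gap of width $2d$. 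Well-separation is what guarantees that the infinitely many small sets accumulating at $A_1$ (resp.\ $A_2$) genuinely organise into contact-connected families on each side, so that $R-F$ has at least two infinite components. This contradicts one-endedness, and hence $A$ is connected.

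Step (iii) is the crux and the main obstacle. Here I would show that a non-degenerate continuum in $A$ forces the contacts graph to be \emph{vertex-hyperbolic}, i.e.\ to fail vertex-parabolicity, contradicting the hypothesis. The natural tool is Schramm's transboundary extremal length: $\delta$-fatness makes the area contributed by each $P_i$ comparable to $\mathrm{diam}(P_i)^2$, while well-separation makes contacts-graph paths correspond to genuine curves in the carrier of $W$ and conversely. Together these yield a two-sided comparison between the discrete vertex extremal length from a base set to infinity in $R$ and the continuous transboundary extremal length of the corresponding curve family in the carrier. When $A$ contains a non-degenerate continuum, the latter family has finite transboundary modulus --- there is genuine ``room to escape'' past the continuum --- so the vertex extremal length to infinity is finite, meaning $R$ is vertex-hyperbolic. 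Concretely this produces, for the very weight witnessing parabolicity, an infinite path of finite weighted length, contradicting the assumption that every infinite path has infinite weighted length.

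Establishing this discrete-to-continuous extremal-length comparison rigorously is the technical heart of the argument, and is exactly where the fatness and well-separation hypotheses are indispensable; by contrast, steps (i) and (ii) are soft topology together with a length--area bound. I would therefore expect the write-up to spend essentially all of its effort on the comparison estimate in (iii), treating (i) and (ii) as routine.
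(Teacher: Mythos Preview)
The paper does not contain a proof of this theorem; it is quoted as Theorem~6.1 of He--Schramm \cite{he95hyperbolic} and invoked as a black box in Section~\ref{sec:proof}. There is therefore no ``paper's own proof'' to compare your sketch against. For what it is worth, your outline is a reasonable roadmap of the He--Schramm argument: the decisive step is indeed a two-sided comparison between discrete (vertex) extremal length on the contacts graph and continuous (transboundary) extremal length in the carrier, with $\delta$-fatness supplying the area--diameter inequality and well-separation ensuring that combinatorial paths and planar curves correspond; from this, a boundary set with more than one point yields finite vertex extremal length to infinity, contradicting vertex-parabolicity.
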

We abuse notation by writing $R(S_{\infty})$ for the contacts graph of the packing given by the squares of $S_{\infty}$. This packing is a.s.\ fat since all its bounded components are squares, and the aspect ratio of the rectangle they tile is a.s.\ finite. Since it tiles all of space, it is also easily seen to be well-separated. To conclude that $S_{\infty}$ has a single point of accumulation, it thus suffices to prove the following two propositions.

\begin{proposition}\label{prop:contacts_recurrent}
$R(S_{\infty})$ is almost surely vertex-parabolic.
\end{proposition}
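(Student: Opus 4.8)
The plan is to exhibit an explicit vertex weight $m'$ on $R(S_{\infty})$ witnessing vertex-parabolicity, built from the recurrence of $G_{\infty}$ and $G_{\infty}^*$ (Theorem~\ref{thm:ginfinity_recur}). Recall that for locally finite graphs recurrence is equivalent to edge-parabolicity, so there exist strictly positive edge weights $m_V:e(G_{\infty})\to(0,\infty)$ and $m_F:e(G_{\infty}^*)\to(0,\infty)$ with $\sum_{e}m_V(e)^2<\infty$ and $\sum_{e}m_F(e^*)^2<\infty$, such that every infinite path in $G_{\infty}$ (resp.\ $G_{\infty}^*$) has infinite $m_V$-length (resp.\ $m_F$-length). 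Since by Corollary~\ref{prop:thm1part} the vertices of $R(S_{\infty})$ are exactly the non-degenerate squares $s_e$, $e\in e(G_{\infty})$, I set $m'(s_e)=m_V(e)+m_F(e^*)$. Then $\sum_e m'(s_e)^2\le 2\sum_e m_V(e)^2+2\sum_e m_F(e^*)^2<\infty$, so the square-summability required for vertex-parabolicity is immediate, and the whole content lies in the divergence condition.

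The substance is to show that every infinite path in $R(S_{\infty})$ has infinite $m'$-length. First I would translate the contact structure into the combinatorics of $G_{\infty}$: by the discussion following Theorem~\ref{thm:bsst}, the top and bottom borders of $s_e$ lie on the primal lines $\ell(u),\ell(w)$ (with $e=\{u,w\}$) and its left and right borders on the facial lines of the two faces incident to $e$, while distinct lines are disjoint or meet in a single point. Consequently two squares meeting along a horizontal segment share a vertex of $G_{\infty}$, hence are adjacent as edges of $G_{\infty}$; two squares meeting along a vertical segment share a face, hence are adjacent as edges of $G_{\infty}^*$; and a corner contact, occurring where a primal line meets a facial line, again forces the two squares to share a vertex or a face, reducing to the previous cases. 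Thus $R(S_{\infty})$ is contained in the union $L(G_{\infty})\cup L(G_{\infty}^*)$ of the two line graphs on the common vertex set $e(G_{\infty})$, each contact being labelled \emph{primal} or \emph{facial}.

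With this labelling, the easy case is when some infinite path $\gamma=(s_{e_i})_{i\ge1}$ in $R(S_{\infty})$ has a tail all of whose contacts are primal (resp.\ facial). Such a tail is an infinite trail in $G_{\infty}$ (resp.\ $G_{\infty}^*$) with distinct edges, and since $G_{\infty}$ is locally finite this trail visits vertices arbitrarily far from its start, hence contains an infinite path; edge-parabolicity then gives $\sum_i m_V(e_i)=\infty$ (resp.\ $\sum_i m_F(e_i^*)=\infty$), so $\sum_i m'(s_{e_i})=\infty$. The main obstacle is the remaining case, in which $\gamma$ alternates between primal and facial contacts infinitely often. Here I would decompose $\gamma$ into its maximal monochromatic runs and splice the primal runs into a single infinite walk $\Omega$ in $G_{\infty}$, joining the last edge of one primal run to the first edge of the next along a bounded path supported on the face responsible for the intervening facial run (such a path exists because consecutive facial-contact edges lie on a common face). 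Applying edge-parabolicity to $\Omega$ yields $\infty=\sum_{e\in\Omega}m_V(e)=\sum_i m_V(e_i)+\Sigma$, where $\Sigma$ is the total $m_V$-mass of the splicing paths; the crux is to control $\Sigma$. I expect this to be the hard part: the splicing edges lie on the faces indexing the facial runs, which crowd toward the accumulation of the shrinking squares, and one must argue—using local finiteness and, if necessary, a symmetric treatment of the facial runs inside $G_{\infty}^*$—that either $\Sigma<\infty$, so that $\sum_i m_V(e_i)=\infty$ directly, or else the facial side $\sum_i m_F(e_i^*)$ already diverges; in all cases $\sum_i m'(s_{e_i})=\infty$. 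Conceptually, this reflects the recurrent/parabolic dichotomy: a contact path of finite $m'$-length would route a finite-energy escape to infinity through the accumulation point, contradicting the recurrence of both $G_{\infty}$ and $G_{\infty}^*$.
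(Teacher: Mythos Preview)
Your setup is natural, the square-summability of $m'(s_e)=m_V(e)+m_F(e^*)$ is fine, and the inclusion $R(S_\infty)\subseteq L(G_\infty)\cup L(G_\infty^*)$ is essentially the content of Lemma~\ref{lem:derived_sub}. The monochromatic-tail case also works. The gap is exactly where you flag it, and it is a real one.

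The splicing argument does not close. Your walk $\Omega$ in $G_\infty$ inserts, between primal runs, boundary paths of the faces arising in the facial runs; since the $e_i$ are distinct, $\Omega$ does contain an infinite simple path $\Omega'$ and edge-parabolicity yields $\sum_{e\in\Omega'}m_V(e)=\infty$. But $\Omega'$ may use many splicing edges and few of the $e_i$, so all you obtain is $\sum_i m_V(e_i)+\Sigma=\infty$. There is no mechanism tying $\Sigma$ to $\sum_i m_F(e_i^*)$: the quantity $\Sigma$ measures the faces $f_j$ in the weight $m_V$, whereas the facial side is measured in the unrelated weight $m_F$. The symmetric argument in $G_\infty^*$ only produces $\sum_i m_F(e_i^*)+\Sigma^*=\infty$ with $\Sigma^*$ an $m_F$-mass of vertex-stars, again unrelated to $\sum_i m_V(e_i)$. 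So the proposed dichotomy (``either $\Sigma<\infty$ or the facial side diverges'') is unjustified, and a face visited by many facial runs can make $\Sigma$ and $\Sigma^*$ both infinite while $\sum_i m'(s_{e_i})$ remains finite for your particular $m'$. I do not see how to repair this without a genuinely new ingredient.

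The paper sidesteps the difficulty by working in a single auxiliary graph that encodes primal and dual incidences simultaneously: the derived map $D_\infty=D(G_\infty)$, with primal vertices, facial vertices, and one subdivision vertex $v_e$ per edge, $v_e$ joined to the two endpoints and two faces of $e$. Crucially, recurrence of $D_\infty$ is established not from recurrence of $G_\infty$ and $G_\infty^*$ but by a fresh appeal to Theorem~\ref{thm:limrecur} (Lemma~\ref{lem:derived_recurrent}). Taking $m$ an edge-parabolicity witness on $D_\infty$, one sets $m'(s_e)$ to be the sum of the four $m$-values on edges at $v_e$. An infinite path in $R(S_\infty)$ then yields, via Lemma~\ref{lem:derived_sub}, a walk $v_{e_1}w_1v_{e_2}w_2\cdots$ in $D_\infty$ containing an infinite simple path $\gamma'$; every edge of $\gamma'$ is incident to exactly one $v_{e_i}$, so $\sum_i m'(s_{e_i})\ge\sum_{f\in e(\gamma')}m(f)=\infty$. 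The point is that the connecting step from $v_{e_i}$ to $v_{e_{i+1}}$ uses only edges already accounted for in $m'(s_{e_i})$ and $m'(s_{e_{i+1}})$: the ``splicing'' is free. That is precisely what fails for the weight $m_V+m_F$.
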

\begin{proposition}\label{prop:contacts_oneend}
$R(S_{\infty})$ is almost surely one-ended.
\end{proposition}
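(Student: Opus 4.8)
The plan is to show that $R(S_\infty)$ is one-ended by leveraging the structural correspondence between the contacts graph of the squaring and the map $G_\infty$, together with the a.s.\ recurrence of both $G_\infty$ and $G_\infty^*$ established in Theorem \ref{thm:ginfinity_recur}. First I would make precise what the vertices and adjacencies of $R(S_\infty)$ are: its vertices are the squares $\{s_e : e \in e(G_\infty)\}$ together with the unbounded component, and two squares are adjacent exactly when their boundaries touch. By the discussion in Section \ref{ssec:squaring}, two squares $s_e, s_{e'}$ touch along a horizontal border precisely when the edges $e,e'$ share a primal line $\ell(v)$ (i.e.\ are incident to a common vertex $v$ of $G_\infty$), and touch along a vertical border precisely when they share a facial line $\ell(f)$ (i.e.\ lie on a common face $f$). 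Thus $R(S_\infty)$ is, up to the identifications forced by this geometry, closely tied to the \emph{radial} (vertex-face incidence) structure of $G_\infty$, and in particular every edge of $R(S_\infty)$ corresponds either to a shared vertex or a shared face of $G_\infty$.

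Next I would argue one-endedness directly from the definition: given a finite set $U$ of squares, I must show $R(S_\infty) - U$ has exactly one infinite component. Since $S_\infty$ is a.s.\ a squaring of a genuine rectangle $[0,\lambda_\infty]\times[0,1]$ with infinitely many squares of positive area (Corollary \ref{prop:thm1part}), and all lines $\ell(v),\ell(f)$ are non-degenerate, I would use Corollary \ref{prop:thm1part} and the single-point-of-accumulation structure being targeted to control where the squares accumulate. The cleanest route is to transfer the question to $G_\infty$: a finite set $U$ of squares corresponds to a finite set of edges $A \subset e(G_\infty)$, and removing the corresponding primal and facial lines severs $R(S_\infty)$ in a way governed by how $A$ separates $G_\infty$ and $G_\infty^*$. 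Because $G_\infty$ is a.s.\ one-ended (a property I would derive from its a.s.\ recurrence and the fact that it is a distributional limit of finite planar graphs with exponential-tail root degree — recurrent one-ended planar limits of this type have a single end, as $G_\infty$ inherits one-endedness from the underlying infinite tree $\bT_\infty$ and the locally finite closure operation), and $G_\infty^*$ is likewise one-ended, the complement of any finite edge set leaves a single unbounded ``outer'' region. I would then check that two squares lying in the same infinite face of $G_\infty - A$ (equivalently outside the finite separated region) remain connected in $R(S_\infty) - U$ by a path alternating through shared vertices and shared faces, each of which survives deletion of the finite set $U$.

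The main obstacle I anticipate is the final connectivity verification: establishing that the infinite component of $G_\infty - A$ (and dually of $G_\infty^* - A$) really does push forward to a \emph{single} infinite component of $R(S_\infty) - U$, rather than splitting into several. The difficulty is that adjacency in $R(S_\infty)$ is a geometric contact relation, which is strictly finer than incidence in $G_\infty$ — two edges can be incident to a common vertex $v$ yet their squares need not touch unless they are consecutive in the cyclic order around $\ell(v)$. Handling this requires showing that along each non-degenerate primal line $\ell(v)$ the squares bordering it from above and below form a connected horizontal ``strip'' in $R(S_\infty)$, and similarly along each facial line, so that map-incidence can be upgraded to contacts-graph connectivity at bounded cost. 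I would therefore prove a lemma asserting that deleting finitely many squares can only disconnect finitely many of these strips, and that the infinite remainder stays connected; one-endedness of both $G_\infty$ and $G_\infty^*$ then guarantees the surviving strips all communicate through the single unbounded region. Combining this with Proposition \ref{prop:contacts_recurrent} and the fat, well-separated properties already noted, Theorem \ref{thm:heschramm} yields the single point of accumulation.
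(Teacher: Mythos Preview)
Your plan differs from the paper's proof in two substantive ways, and one of the differences exposes a gap you would need to close.

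First, on one-endedness of the underlying map: you propose to derive one-endedness of $G_\infty$ from its recurrence. This inference is not valid in general (two copies of $\mathbb{Z}$ glued at a point is recurrent but two-ended), and the paper does not argue this way. You do mention in parentheses that one-endedness is inherited from $\bT_\infty$ through the closure operation; that is the correct source, and it is exactly what the paper uses---but the paper routes it through $\bQ_\infty$ rather than $G_\infty$ and $G_\infty^*$ separately. Since $v(\bQ_\infty)=v(G_\infty)\cup f(G_\infty)$ and the line segments $\ell(\cdot)$ are indexed precisely by $v(\bQ_\infty)$, working with $\bQ_\infty$ is the natural choice.

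Second, and more importantly, the paper avoids entirely the ``strip'' connectivity verification you identify as the main obstacle. Instead of building a path in $R(S_\infty)-U$ directly, the paper argues by contradiction and exploits planarity: if $R(S_\infty)-U$ had two infinite components, then $\bigcup_{i\in I}\partial s_i$ would contain a simple closed curve $\gamma$ in $\mathbb{R}^2$ separating them. One then lets $K=\{u\in v(\bQ_\infty):\ell(u)\cap\gamma\neq\emptyset\}$, checks $K$ is finite (local finiteness plus non-degeneracy of lines), and observes that any path in $\bQ_\infty$ between the two sides must have some $\ell(u)$ crossing $\gamma$, hence must meet $K$. This contradicts one-endedness of $\bQ_\infty$. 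The Jordan-curve step is what lets the paper bypass the delicate issue you flagged---that contact in $R(S_\infty)$ is strictly finer than incidence in $G_\infty$---by converting a graph-separation statement into a planar-separation statement and then back.

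Your route could likely be pushed through, but the strip lemma you would need (that the squares along each $\ell(v)$ form a connected chain in $R(S_\infty)$, and that finitely many deletions leave these chains communicating through the single end of $\bQ_\infty$) is essentially a reconstruction of the Jordan-curve argument by hand, and you have not indicated how to prove it. The paper's approach is both shorter and sidesteps the obstacle you correctly anticipated.
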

We begin by proving Proposition~\ref{prop:contacts_oneend}, which is a consequence of the following straightforward fact. Slight modifications of Fact~\ref{fact:qoneend} have already appeared in the literature \cite{angel2003uipt,krikun05local}.
\begin{fact}\label{fact:qoneend}
$\bQ_{\infty}$ is almost surely one-ended. 
\end{fact}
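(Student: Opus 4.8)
The final statement is Fact~\ref{fact:qoneend}: $\bQ_{\infty}$ is almost surely one-ended. Recall $\bQ_{\infty}$ is the (partial) closure of $\bT_{\infty}$, the critical $\mu$-Galton-Watson tree with $\mu(\{0\})=\mu(\{2\})=1/2$ conditioned to be infinite, and that $\rQ(\bT_n)\to\bQ_{\infty}$ almost surely in the local weak sense.

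\medskip

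The plan is to transfer one-endedness from $\bT_{\infty}$ to $\bQ_{\infty}$ through the closure operation. First I would recall the structure of the conditioned critical Galton-Watson tree: by the Kesten spine decomposition, $\bT_{\infty}$ almost surely consists of a single infinite ray (the spine), off of which hang independent unconditioned finite critical Galton-Watson trees. In particular $\bT_{\infty}$ is almost surely one-ended as a tree, since removing any finite set of vertices leaves exactly one infinite component (the tail of the spine). The strategy is then to argue that the closure operation does not merge the ``two sides'' of the spine at infinity, i.e.\ that deleting a large ball in $\bQ_{\infty}$ still leaves a single infinite component.

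\medskip

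The key steps, in order. \textbf{(1)} Fix a finite vertex set $U\subset v(\bQ_{\infty})$ and a radius $r$ with $U\subset B_{\bQ_{\infty}}(\rho,r)$. \textbf{(2)} Control the locality of closure: each closure edge identifies a leaf $w$ with a nearby internal vertex $a$ reached a bounded number of contour steps later, so a vertex of $\bQ_{\infty}$ at tree-distance $d$ from the root corresponds to a tree vertex whose contour-time is comparable, and closure edges only connect vertices that are close in the contour exploration. The point is that an edge of $\bQ_{\infty}$ joins vertices that are within a uniformly controlled combinatorial distance in $\bT_\infty$ along the contour, so a path to infinity in $\bQ_{\infty}$ forces a path to infinity along the spine of $\bT_{\infty}$. \textbf{(3)} Two infinite components in $\bQ_{\infty}-U$ would correspond to two disjoint escapes to infinity that remain separated even after all identifications; but every escape to infinity in $\bQ_\infty$ must follow the unique spine of $\bT_\infty$ (the only source of infinitely many vertices), so the escapes cannot be separated by the finite set $U$ and must lie in a common infinite component. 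Hence $\bQ_{\infty}-U$ has exactly one infinite component. \textbf{(4)} Since $U$ was arbitrary, $\bQ_{\infty}$ is one-ended. Alternatively, one may run this argument at finite level: $\rQ(\bT_n)$ is (being the closure of a finite tree) a quadrangulation of a hexagon, and one-endedness of the limit can be read off from the almost sure local convergence together with the spine structure, mirroring the arguments for the UIPT/UIPQ in \cite{angel2003uipt,krikun05local}.

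\medskip

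\textbf{Main obstacle.} The genuinely delicate step is \textbf{(2)}: making precise that the closure operation is ``local'' enough that escapes to infinity in $\bQ_\infty$ are forced along the spine of $\bT_\infty$. One must verify that closure identifications do not create long-range connections that could link two far-apart regions of the tree into separate infinite pieces after deleting $U$ — equivalently, that the number of contour steps between a leaf and its closure-partner is almost surely finite and well-controlled along the spine. Because off-spine subtrees are finite almost surely, their closures are finite and cannot themselves reach infinity; the whole weight of the argument is confirming that the only mechanism for reaching infinity is following the spine, and that the spine yields a single end rather than two. This is exactly the point where the cited analogues \cite{angel2003uipt,krikun05local} do the analytic work, and I would adapt their technique rather than reprove it from scratch.
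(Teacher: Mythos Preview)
Your high-level strategy---transfer one-endedness from $\bT_{\infty}$ to $\bQ_{\infty}$ via the closure operation---matches the paper's, but you have made the transfer step far harder than it needs to be, and as a result you declare an obstacle (your step (2)) that does not actually exist.

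The paper's argument is almost a one-liner once you observe that the closure operation is nothing more than a \emph{vertex identification}: $\bQ_{\infty}$ is a quotient of $\bT_{\infty}$ obtained by gluing certain leaves to certain internal vertices, with no new edges added. Thus there is a surjective graph homomorphism $\phi:\bT_{\infty}\to\bQ_{\infty}$. Because $\bQ_{\infty}$ is almost surely locally finite, each fibre $\phi^{-1}(u)$ is finite, so for any finite $U\subset v(\bQ_{\infty})$ the preimage $\hat U=\phi^{-1}(U)$ is finite. The map $\phi$ then sends each connected component of $\bT_{\infty}-\hat U$ into a connected component of $\bQ_{\infty}-U$, and since fibres are finite, every infinite component of $\bQ_{\infty}-U$ must receive at least one infinite component of $\bT_{\infty}-\hat U$. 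Hence $\bQ_{\infty}-U$ has no more infinite components than $\bT_{\infty}-\hat U$, which has exactly one. Done.

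Notice that this argument is completely insensitive to \emph{where} the identifications occur: it does not matter whether a leaf is glued to a nearby vertex or to one arbitrarily far away along the contour. Your worry about ``long-range connections'' is therefore irrelevant---extra identifications can only \emph{merge} components, never split them, so they can only reduce the number of ends. All of the contour-distance control you propose in step (2), and the appeal to the analytic machinery of \cite{angel2003uipt,krikun05local}, is unnecessary. The only facts you need beyond one-endedness of $\bT_{\infty}$ are that the closure is a vertex quotient and that $\bQ_{\infty}$ is locally finite (so that $\hat U$ is finite).
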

\begin{proof}
First, $\bT_{\infty}$ is a.s. one-ended; this is well-known, but in particular follows from the explicit description of $\bT_{\infty}$ given in \cite{luczak04building} and reprised in \cite{ab14hex} . 
Next, for any finite set $U \subset v(\bQ_{\infty})$, if $\hat{U}$ is the pre-image of $U$ in $v(T_{\infty})$ then $\bT_{\infty}-\hat{U}$ has at least as many infinite connected components as $\bQ_{\infty}-U$; this is immediate from the fact that $\bT_{\infty}$ is formed from $\bT_{\infty}$ by making vertex identifications, since both $\bT_{\infty}$ and $\bQ_{\infty}$ are a.s.\ locally finite. Thus, for finite $U$, $\bQ_{\infty}-U$ a.s.\ has at most one infinite connected component; on the other hand, it a.s.\ has at least one such component since $\bQ_{\infty}$ is a.s.\ locally finite. 
\end{proof}

\begin{proof}[Proof of Proposition~\ref{prop:contacts_oneend}]%
We identify $\bQ_{\infty}$ with the set $v(G_{\infty}) \cup f(G_{\infty})$, where $f(G_{\infty})$ denotes the set of faces of $G_{\infty}$, so edges of $\bQ_{\infty}$ precisely encode incidences between vertices and faces in $G_{\infty}$. With this identification, for $u \in v(Q_{\infty})$ we write $\ell(u)$ for the horizontal or vertical line segment corresponding to $u$ as described at the end of Section~\ref{ssec:squaring}. 

Let $\{s_i,i \in I\}$ be a finite set of squares in $v(R(S_{\infty}))$ such that $\bigcup_{i \in I} s_i$ is simply connected. Then the set of squares in $v(R(S_{\infty}))\setminus \{s_i,i \in I\}$ incident to some square in $\{s_i,i \in I\}$ induces a connected subgraph of $R(S_{\infty})$, from which it is immediate that $R(S_{\infty}) - \{s_i,i \in I\}$ is (graph theoretically) connected. This implies that any finite separating set in $R(S_{\infty})$ contains a cycle in $R(S_{\infty})$. 

\begin{figure} 
\includegraphics[scale=1]{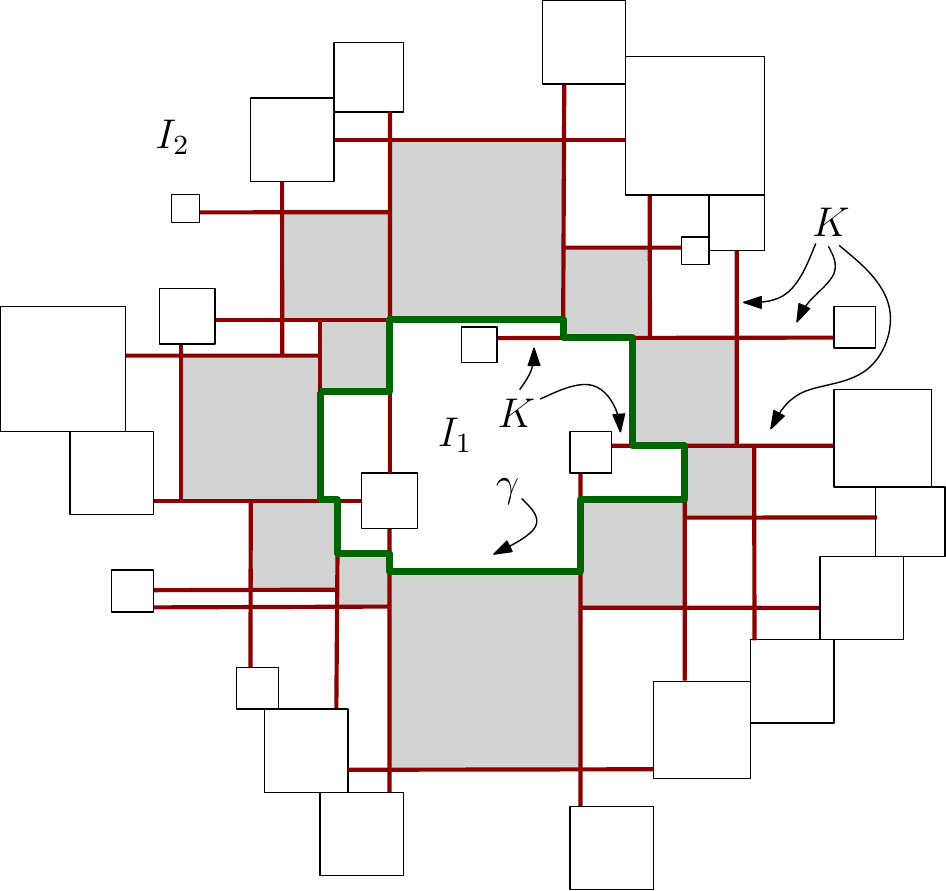}
\caption{\small The set $\{s_i,i \in I\}$
is given by the grey squares. Thicker lines (both red and green) are the lines $\ell(u)$ for $u \in K$. The thickest, dark-green, closed path is one possibility for $\gamma$.}
\label{fig:oneendfigure}
\end{figure}

Note that by Corollary~\ref{prop:thm1part} $R(S_{\infty})$ is a.s.\ an infinite graph. Now suppose $R(S_\infty)$ is not one-ended, let $\{s_i,i \in I\} \subset v(R(S_{\infty}))$ be such that $R(S_{\infty}) - \{s_i,i \in I\}$ contains at least two infinite connected components, 
and write $I_1$,$I_2$ for two such infinite components. 
Note that $\bigcup_{i \in I} \partial s_{i}$ contains a simple closed path $\gamma$ in $\mathbb{R}^2$ such that (relabelling $I_1$ and $I_2$ if necessary) all squares in $I_1$ (resp.\ $I_2$) lie in the interior (resp.\ exterior) of $\gamma$; see Figure~\ref{fig:oneendfigure}. Let $J_1$ (resp.\ $J_2$) be the set of vertices $u$ of $\bQ_{\infty}$ with $\ell(u)$ strictly contained in the interior (resp.\ exterior) of $\gamma$. It is clear that $\bQ_{\infty}[J_1]$ and $\bQ_{\infty}[J_2]$ each contain at least one infinite connected component. 

The set $\bigcup_{i \in I} \partial s_{i}$ consists of finitely many horizontal and vertical line segments; let 
\[
K = \left\{u \in v(\bQ_{\infty}):  \ell(u) \cap \bigcup_{i\in I} \partial s_i \neq \emptyset \right\} .
\]
Since $\bQ_{\infty}$ is locally finite and $S_{\infty}$ contains no degenerate lines, $K$ is a finite set. Now let $P=(u_1,u_2,\ldots,u_m)$ be any path in $\bQ_{\infty}$ joining $J_1$ and $J_2$. Then $\gamma \cap \bigcup_{i=1}^m \ell(u_i) \neq \emptyset$, so $P$ must contain a vertex from $K$. Therefore $\bQ_{\infty}-K$ has at least two infinite connected components, contradicting Fact~\ref{fact:qoneend}.
 \end{proof}

The remainder of the section is devoted to proving Proposition~\ref{prop:contacts_recurrent}.
It is natural to try a direct appeal to Theorem~\ref{thm:limrecur} to show that $R(S_{\infty})$ is recurrent. However, our information about the law of the sequence $(R(S_n),n \ge 1)$ seems too weak to apply this approach. More precisely, root $R(S_n)$ at its vertex which corresponds to the unbounded component of $S_n$. It is not clear how to show that the law of this vertex is (exactly or approximately) stationary conditional on $R(S_n)$.\footnote{One way to overcome this difficulty would be to derive more detailed enumerative information about the number of squarings with $n$ squares; question (9) of Section~\ref{sec:quests} relates to this.} Furthermore, we do not see an obvious choice of root vertex which would improve matters. 

Instead of applying Theorem~\ref{thm:limrecur} to the contact graphs, 
we we first define a sequence of graphs $(D_n,1 \le n \le \infty)$ to which Theorem~\ref{thm:limrecur} does apply to show that $D_{\infty}$ is recurrent and so edge-parabolic. We then show that edge-parabolicity of $D_{\infty}$ implies vertex-parabolicity for $R(S_{\infty})$.  
We now proceed to details. 

\begin{figure}[ht]
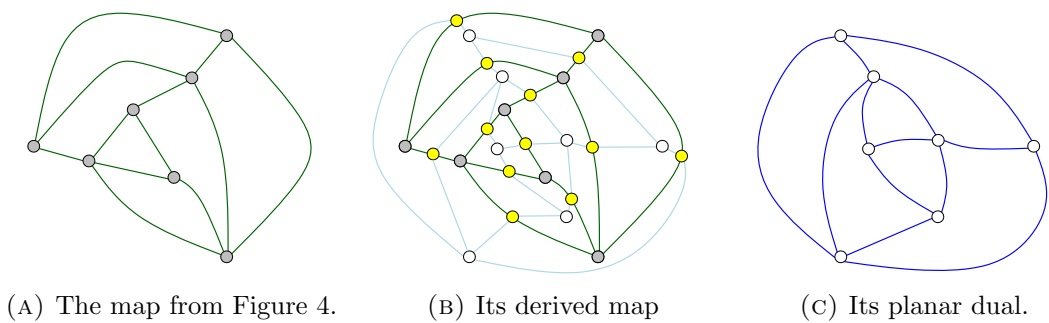

\begin{subfigure}[b]{0.3\textwidth}
		\includegraphics[width=\textwidth,page=7]{figures/treetohex.pdf}
               \caption{The map from Figure~\ref{fig:trivialbij}.}
                \label{fig:primal}
\end{subfigure}%
\quad
\begin{subfigure}[b]{0.3\textwidth}
		\includegraphics[width=\textwidth,page=8]{figures/treetohex.pdf}
                \caption{Its derived map}
                \label{fig:deriveddef}
\end{subfigure}%
\quad
\begin{subfigure}[b]{0.3\textwidth}
		\includegraphics[width=\textwidth,page=9]{figures/treetohex.pdf}
                \caption{Its planar dual.}
                \label{fig:dual}
\end{subfigure}%
\quad
\caption{The derived map.} 
\label{fig:derived_def}
\end{figure}
Let $G$ be a planar map. The {\em derived map} of $G$, denoted $D=D(G)$ is obtained as follows. First, subdivide each edge of $G$ once; call the newly created vertices $\{v_e,e \in e(G)\}$. Second, add a vertex to the interior of each face, and join each facial vertex to all the incident subdivision vertices. See Figure~\ref{fig:derived_def}. Note that if $F$ is the planar dual of $G$ then $D(F)=D(G)$. 

\begin{lemma}\label{lem:derived_sub}
Let $\mathrm{G}=(G,st)$ be a planar map with squaring $S$. Then the contacts graph $R(S)$ is isomorphic to a subgraph of $D(G)^2$. 
\end{lemma}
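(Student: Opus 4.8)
The plan is to build an explicit injective graph homomorphism from $R(S)$ into $D(G)^2$. I would send each bounded square $s_e$ (so $e \in e(G)\setminus\{s,t\}$ carries positive current) to the subdivision vertex $v_e$, and send the single unbounded square to $v_{st}$; this is injective because distinct edges give distinct subdivision vertices. The key is that $D(G)$ is bipartite, with the subdivision vertices $\{v_e\}$ on one side and the original vertices together with the facial vertices $\{w_f\}$ on the other, and the neighbours of $v_e$ are exactly the two endpoints of $e$ and the two facial vertices of the faces bordering $e$. Hence two subdivision vertices lie at distance at most $2$ in $D(G)$ precisely when the corresponding edges share an endpoint or a face. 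So the whole lemma reduces to one geometric claim: \emph{if $s_e\cap s_{e'}\neq\emptyset$ then $e$ and $e'$ share an endpoint or a face}. The link to the geometry is the correspondence recorded in Section~\ref{ssec:squaring}: the four borders of $s_e$ lie on the four lines $\ell(u),\ell(v),\ell(f),\ell(f')$, which match up bijectively with the four $D(G)$-neighbours of $v_e$. It therefore suffices to show that two touching squares have a common bordering line.

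I would first clear away the cases where the contact has positive length. If $s_e$ and $s_{e'}$ meet along a horizontal segment, that segment lies on a bordering line of each, and these two lines overlap in positive length; since distinct lines meet in at most a single point, they must be one and the same line $\ell(u)$, so $u$ is a shared endpoint. A shared vertical segment gives a shared face by the identical argument. The unbounded square is equally direct: a bounded square meets the exterior of $[0,\lambda(\mathrm{G})]\times[0,1]$ only along the boundary of the rectangle, whose four sides are $\ell(s)$, $\ell(t)$, and the two facial lines of the faces bordering the root edge $st$ — and these are exactly the bordering lines attached to the four neighbours of $v_{st}$. Thus every square touching the unbounded region shares a line with it, placing its image within distance two of $v_{st}$.

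The heart of the matter, and the step I expect to be the main obstacle, is a contact at a single point $z$, that is, a point where four squares meet. One horizontal and one vertical line pass through $z$, and each of the two touching squares contributes a horizontal border and a vertical border ending at $z$; consequently the horizontal line through $z$ borders both squares and the vertical line through $z$ borders both squares. If the horizontal line through $z$ is a single primal line $\ell(u)$ we get the shared vertex $u$, and if the vertical line through $z$ is a single facial line $\ell(f)$ we get the shared face $f$, so in either case the desired common neighbour appears. The one way this can fail is a \emph{doubly split crossing}, in which the horizontal line through $z$ is the abutment at $z$ of two distinct primal lines $\ell(u)\neq\ell(u')$ and, simultaneously, the vertical line through $z$ is the abutment of two distinct facial lines $\ell(f)\neq\ell(f')$ (each pair meeting only at $z$, which the disjoint-or-single-point property allows). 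Here I would show that a doubly split crossing is exactly a zero-current edge sitting at $z$: if the vertical line splits into $\ell(f)$ and $\ell(f')$ at the same $x$-coordinate, the edge $e_0$ separating $f$ and $f'$ has both of its vertical borders on that line, forcing its square to have zero width, hence zero current and equal-potential endpoints (the prototype being the edge $ab$ in the squaring of $K_4$). So the delicate point on which the embedding hinges is proving that every single-point contact is benign — at least one of its two crossing lines unsplit — which is equivalent to ruling out such degenerate edges. This holds automatically for the squarings the lemma is applied to: by Corollary~\ref{prop:thm1part} the maps $G_n$ and $G_\infty$ are $3$-connected with no degenerate lines, and their vertex potentials are almost surely pairwise distinct across edges, so no zero-current edge and hence no doubly split crossing ever arises, and every contact yields a common neighbour as required.
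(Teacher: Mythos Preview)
Your framework is exactly the paper's: send $s_e\mapsto v_e$ (and the unbounded square to $v_{st}$), reduce to ``touching squares share a bordering primal or facial line'', handle positive-length contacts via the single-point-intersection property of distinct lines, and then treat single-point contacts. Where you diverge is that you take the doubly-split crossing seriously, whereas the paper simply asserts that when two primal lines meet at $x$ ``the associated facial line passes through $x$ and thus borders both $s$ and $\hat{s}$'', tacitly assuming the vertical line through $x$ is a single unsplit facial line. You are right to be suspicious: $K_4$ rooted at any edge has a doubly-split crossing at its centre (the zero-current diagonal sits there), and for the diagonally opposite pair of squares the natural embedding genuinely fails --- their subdivision vertices have no common neighbour in $D(K_4)$. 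So you have identified a step the paper's proof glosses over.

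Your fix --- doubly split forces a zero-current edge, hence excluded in the intended application --- is the correct mechanism, but two loose ends remain. First, ``the edge $e_0$ separating $f$ and $f'$'' is not quite the right location in general; one rather argues that at the split vertex $u$, the rightmost upward square $s_a$ and rightmost downward square $s_d$ have $f$ and $f'$ respectively on their right, so if no zero-current edge at $u$ intervened between $a$ and $d$ in the cyclic order, the face between them would be simultaneously $f$ and $f'$, a contradiction. Second, Corollary~\ref{prop:thm1part} only gives non-degenerate $\ell(v)$ and $\ell(f)$, i.e.\ each vertex and face meets \emph{some} positive-current edge; it does not say every edge carries positive current, and ``vertex potentials are almost surely pairwise distinct across edges'' is nowhere established (indeed $K_4$ is $3$-connected yet has a zero-current edge). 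For the sole use of the lemma, in Proposition~\ref{prop:contacts_recurrent}, this can be patched either by proving a.s.\ absence of zero-current edges in $G_\infty$, or more simply by noting that any doubly-split contact can be rerouted through an adjacent square (NW--NE--SE in place of NW--SE, each step sharing a line), which is all the infinite-path lifting there actually needs.
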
 
\begin{proof}
The vertices $v(R(S))$ correspond to squares of $S$, and thence to edges of $G$. This gives a natural map from $v(R(S))$ to the set of subdivision vertices $\{v_e,e \in e(G)\}$. We similarly associate primal vertices of $D(G)$ to vertices of $G$ and thence to primal lines of $S$, and facial vertices of $D(G)$ to faces of $G$ and thence to facial lines of $S$. (Primal and facial lines were defined in Section~\ref{ssec:squaring}.) 

Fix squares $s,\hat{s} \in  v(R(S))$ and write $e,\hat{e}$ for the corresponding edges of $G$.
 If $s$ and $\hat{s}$ border a common primal line then $e$ and $\hat{e}$ share a common endpoint, so $v_e$ and $v_{\hat{e}}$ are joined by a path of length two in $D(G)$; see Figure~\ref{fig:derived_a}. Since the derived graph of $G$ and of the dual of $G$ are identical, the same holds $s$ and $\hat{s}$ border a common facial line. To prove the lemma it thus suffices to show that if $v$ and $\hat{v}$ are adjacent in $R(S)$ then $s$ and $\hat{s}$ border a common primal or facial line. This is obvious unless $s$ and $s'$ meet at a single point $x \in \mathbb{R}^2$. 

If the latter occurs then there are precisely $4$ squares that meet at $x$. If two primal lines meet at $x$ then the corresponding vertices of $G$ lie on a common face, and the associated facial line passes through $x$ and thus borders both $s$ and $\hat{s}$ (see Figure~\ref{fig:derived_c}). 
Otherwise, a primal line passes through $x$ and thus borders both $s$ and $\hat{s}$. In either case $s$ and $\hat{s}$ border a common primal or facial line (see Figure~\ref{fig:derived_b}). This completes the proof. 
\end{proof}
\begin{figure}[ht]
\begin{subfigure}[b]{0.3\textwidth}
		\includegraphics[width=\textwidth,page=1]{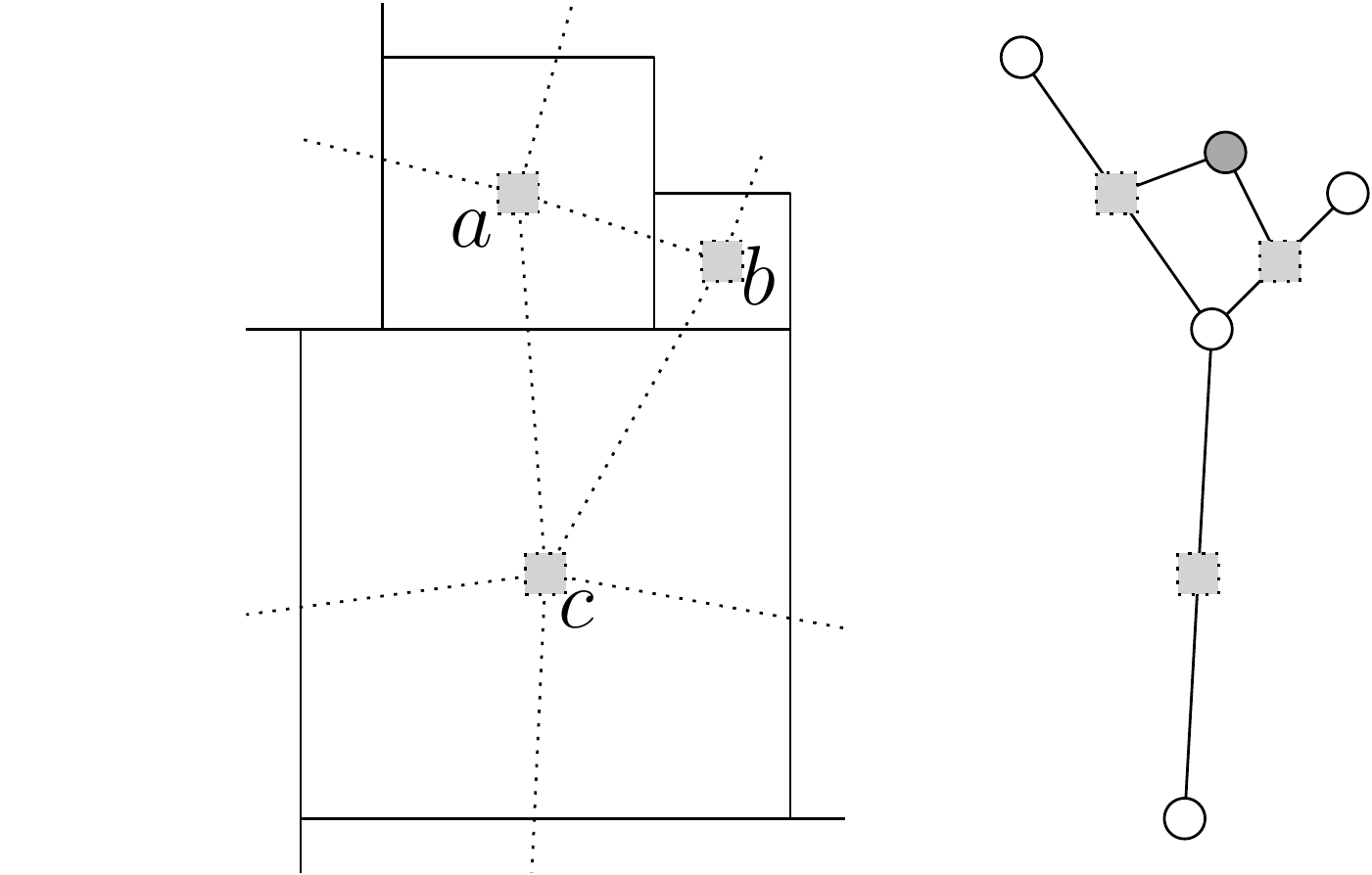}
                \caption{$a,b,c$ are vertices of the contacts graph.}
                \label{fig:derived_a}
\end{subfigure}%
\quad
\begin{subfigure}[b]{0.3\textwidth}
		\includegraphics[width=\textwidth,page=3]{figures/derived.pdf}
                \caption{The bold vertical line is (part of) a facial line}
                \label{fig:derived_c}
\end{subfigure}%
\quad
\begin{subfigure}[b]{0.3\textwidth}
		\includegraphics[width=\textwidth,page=2]{figures/derived.pdf}
                \caption{The bold horizontal line is (part of) a primal line.}
                \label{fig:derived_b}
\end{subfigure}%
\quad
\caption{\small Vertices of the contacts graph, and the corresponding vertices of the derived map, are drawn as squares. Primal vertices are white disks, and dual vertices are grey disks. \\(A) If two squares border a common primal (resp. facial) line then their corresponding vertices in the derived graph are joined by a path of length $2$ through a primal (resp.~dual) vertex. (B) and (C): if squares meet at a point then they border a common primal or facial line.} 
\label{fig:derived}
\end{figure}

For $1 \le n \le \infty$, write $D_n$ for the derived map of $G_n$. 
\begin{lemma}\label{lem:derived_recurrent}
$D_{\infty}$ is almost surely recurrent. 
\end{lemma}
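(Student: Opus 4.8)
The plan is to apply the Gurel-Gurevich--Nachmias recurrence criterion (Theorem~\ref{thm:limrecur}) to $D_\infty$, exactly as was done for $G_\infty$ itself in Theorem~\ref{thm:ginfinity_recur}. Forming the derived map is a purely local operation: the rooted ball $B_{D(G)}(\cdot,r)$ is determined by a bounded-radius ball in $G$ around the corresponding vertex, edge, or face. Since $\hat{\bG}_n \to \bG_\infty$ in distribution in the local weak sense by Theorem~\ref{thm:distlim}, the derived maps $D(\hat{\bG}_n)$ (which are again finite and planar) converge in distribution to $D(G_\infty)=D_\infty$. It therefore suffices to equip $D(\hat{\bG}_n)$ with a stationary root $\rho_n$, to verify that $(D(\hat{\bG}_n),\rho_n)$ converges in distribution to some $(D_\infty,\rho)$, and to check that $\deg_{D_\infty}(\rho)$ has exponential tail; recurrence is a root-independent event, so Theorem~\ref{thm:limrecur} then gives the claim.

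The one genuine subtlety is the rooting: the obvious choice of rooting $D(\hat{\bG}_n)$ at the image of the primal root vertex $\hat s_n$ is \emph{not} stationary, since it ignores the subdivision and facial vertices. To fix this I would use the explicit degree structure of the derived map. Writing $G=\hat{\bG}_n$, a primal vertex $v$ has $\deg_D(v)=\deg_G(v)$, each subdivision vertex has degree $4$, and each facial vertex $f$ has $\deg_D(f)=\deg_{G^*}(f)$; summing gives $2e(D)=8e(G)$, with the three vertex classes contributing $2e(G)$, $4e(G)$, and $2e(G)$ respectively. Hence the stationary (degree-biased) measure on $D(G)$ selects a primal vertex with probability $1/4$, a subdivision vertex with probability $1/2$, and a facial vertex with probability $1/4$, \emph{independently of $n$}. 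I would build $\rho_n$ as exactly this mixture and analyse the three conditional laws separately: a degree-biased primal root is the image of $\hat s_n$ and converges via $(\hat{\bG}_n,\hat s_n)\to(\bG_\infty,s)$; a uniform subdivision root is the image of a uniformly chosen edge, which up to orientation is the root edge of $\hat{\bG}_n$, and converges via the edge-rooted convergence of Theorem~\ref{thm:distlim}; and a degree-biased facial root is handled by the self-duality $\hat{\bG}_n\overset{d}{=}\hat{\bG}_n^*$ together with the identity $D(G)=D(G^*)$, which reduces it to the primal case for the dual. Because the three mixing weights are constant in $n$, the mixture $(D(\hat{\bG}_n),\rho_n)$ converges in distribution to the corresponding mixture $(D_\infty,\rho)$, which by construction realizes $D_\infty$ as a distributional limit of finite planar graphs in the sense required by Theorem~\ref{thm:limrecur}.

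It remains to bound the tail of $\deg_{D_\infty}(\rho)$. If $\rho$ is a subdivision vertex its degree is exactly $4$. If $\rho$ is the degree-biased primal vertex, its degree is $\deg_{\bG_\infty}(s)$, which has exponential tail by Fact~\ref{fact:expvert}. If $\rho$ is a facial vertex, its degree is a degree-biased face degree of $\bG_\infty$; by the self-duality used in the proof of Theorem~\ref{thm:ginfinity_recur} this has the same law as $\deg_{\bG_\infty^*}(s^*)$, equivalently as $\deg_{\bG_\infty}(s)$, and so also has exponential tail. A finite mixture of random variables each with exponential tail again has exponential tail, so $\deg_{D_\infty}(\rho)$ does, and Theorem~\ref{thm:limrecur} applies to give recurrence of $D_\infty$.

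I expect the main obstacle to be precisely the stationarity of the root: one must resist rooting the derived map at the obvious primal vertex and instead verify that the three-type mixture above is both genuinely stationary for each finite $n$ and convergent in the local weak sense, with the facial component relying on the duality input. The remaining ingredients---locality of $D(\cdot)$ and the exponential tails---are either routine or already available in the paper.
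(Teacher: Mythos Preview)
Your proposal is correct and follows the same strategy as the paper: realise $D_\infty$ as a distributional limit of the finite planar derived maps $D(\hat{\bG}_n)$ with a stationary root, check the exponential tail of the root degree, and apply Theorem~\ref{thm:limrecur}.

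The one noteworthy difference is in the rooting. The paper sets $\rho_n=\hat{s}_n$ with probability $1/2$ and $\rho_n=\hat{v}_n$ (the subdivision vertex of the root edge) with probability $1/2$, and asserts this is stationary on $\hat{D}_n$. Taken literally this is not quite right, since facial vertices carry one quarter of the stationary mass; your $\tfrac14/\tfrac12/\tfrac14$ primal/subdivision/facial mixture is the correct conditional law. The paper's shortcut is salvaged by exactly the self-duality you invoke: since $\hat{\bG}_n\overset{d}{=}\hat{\bG}_n^*$ and $D(G)=D(G^*)$, rooting at the degree-biased primal vertex $\hat{s}_n$ gives the same \emph{joint} law of $(\hat{D}_n,\rho_n)$ as rooting at the degree-biased facial vertex, so the paper's two-type mixture has the same joint distribution as your three-type stationary one. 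In effect you have made explicit the duality step that the paper leaves implicit; otherwise the arguments coincide.
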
 
\begin{proof}
Let $\hat{\bG}_n=(\hat{G}_n,\hat{s}_n\hat{t}_n)$ be as in Theorem~\ref{thm:distlim}. Let $\hat{D}_n$ be the derived map of $\hat{G}_n$, let $\hat{v}_n$ be the subdivision vertex corresponding to $\{\hat{s}_n,\hat{t}_n\}$, and let $\rho_n=\hat{s}_n$ with probability $1/2$ and $\rho_n=\hat{v}_n$ with probability $1/2$. Likewise, let $v_{st}$ be the subdivision vertex corresponding to edge $\{s,t\}$ in $G_{\infty}$, and let $\rho_{\infty}$ be either $s$ or $v_{st}$, each with probability $1/2$. Then since $\hat{\bG}_n$ converges in distribution to $\bG_{\infty}$, it also holds that $(\hat{D}_n,\rho_n)$ converges in distribution to $(D_{\infty},\rho_{\infty})$. 

Next, since $\hat{s}_n\hat{t}_n$ is a uniformly random edge of $\hat{G}_n$, it is immediate from the definition of $\rho_n$ that $\rho_n$ is distributed according to the stationary law of $\hat{D}_n$. It follows that $(D_{\infty},\rho_{\infty})$ is a distributional limit of finite planar graphs. Furthermore, if $\rho=s$ then the degree $\deg_{D_{\infty}}(\rho)=\deg_{G_{\infty}}(s)$, and otherwise $\deg_{D_{\infty}}(\rho)=4$. By Fact~\ref{fact:expvert} it follows that $\deg_{D_{\infty}}(\rho')$ has exponential tails, and thus by Theorem~\ref{thm:limrecur}, $D_{\infty}$ is almost surely recurrent. 
\end{proof}

\begin{proof}[Proof of Proposition~\ref{prop:contacts_recurrent}]
By Lemma~\ref{lem:derived_recurrent}, $D_{\infty}$ is almost surely recurrent and so edge-parabolic. Let $m: e(D_{\infty}) \to [0,\infty)$ be such that $\sum_{e\in e(D_{\infty})} m(e)^2 < \infty$ and such that $\sum_{e \in e(\gamma)} m(e)$ is infinite for any infinite path $\gamma$ in $D_{\infty}$. 
Use $m$ to define a function $m': v(R_{\infty}) \to [0,\infty)$ as follows. For $s_e \in v(R_{\infty})$, let $v_e$ be the corresponding subdivision vertex of the derived graph, and let 
\[
m'(s_e) = \sum_{\{f \in e(D_{\infty}): v_e \in f\}} m(f). 
\]
In other words, $m'(s_e)$ is the sum of the $m$-masses of the four edges incident to $v_e$ in the derived graph.

Each subdivision vertex has degree four in the derived graph, and each edge of the derived graph is incident to exactly one subdivision vertex. It follows by Cauchy-Schwarz that 
\begin{align*}
\sum_{s \in v(R(S_{\infty}))} m'(s)^2 & = \sum_{e \in e(G_{\infty})} \left(\sum_{\{f \in e(D_{\infty}): v_e \in f\}} m(f)\right)^2   \\
		& \le \sum_{e \in e(G_{\infty})} 4 \sum_{\{f \in e(D_{\infty}): v_e \in f\}} m(f)^2 \\
		& = \sum_{f \in e(D_{\infty})} 4m(f)^2  < \infty. 
\end{align*}

Now suppose that $\gamma$ is an infinite path in the contacts graph. Then Lemma~\ref{lem:derived_sub} implies that 
$
\bigcup_{s_e \in v(\gamma)} \{f \in e(D_{\infty}): v_e \in f\} 
$
contains an infinite path $\gamma'$ in $D_{\infty}$. 
It follows that 
\[
\sum_{s_e \in v(\gamma)} m'(s_e) = \sum_{s_e \in v(\gamma)} \sum_{\{f \in e(D_{\infty}): v_e \in f\}} m(f) \ge 
\sum_{e \in e(\gamma')} m(e) = \infty, 
\]  
the last equality by our choice of $m$. Since $\gamma$ was an arbitrary infinite path, it follows that $R(S_{\infty})$ is vertex-parabolic. 
\end{proof}

\section{\large {\bf Proof of Theorem~\ref{thm:main}}} \label{sec:proof}
The a.s.\ Hausdorff convergence of $S_n$ to $S_{\infty}=\cS(\bG_{\infty})$ was established in Corollary~\ref{prop:thm1part}; it remains to show that $S_{\infty}$ a.s.\ has exactly one point of accumulation. 
 
Since $S_{\infty}$ is an infinite squaring and is compact, it clearly has at least one point of accumulation. 
The fact that $S_{\infty}$ has at most one point of accumulation follows from Theorem~\ref{thm:heschramm}, once it is verified that the contacts graph $R(S_{\infty})$ is one-ended and vertex-parabolic; this was accomplished in Propositions~\ref{prop:contacts_recurrent} and~\ref{prop:contacts_oneend}. \qed

\section{\large {\bf Further Questions and Topics}} \label{sec:quests}

\begin{enumerate}
\item We begin with an analogue of Conjecture 7.1 of \cite{duplantier2011liouville} and of Conjecture 1 (a) of \cite{sheffield10conformal}, for the random squarings $S_n$. There is a unique translation and scaling under which the image $S_n'$ of $S_n$ is centred at 0 and such that when $S_n'$ is stereographically projected to the Riemann sphere $\mathbb{C}^*=\mathbb{C} \cup \{\infty\}$, the image of the unbounded region of $\mathbb{R}^2\setminus S_n$ has area $1/n$. Apply this transformation, and let $\mu_n$ be the measure on $\mathbb{C}^*$ obtained by letting each connected component of $\mathbb{C}^*\setminus S_n'$ have measure $1/n$.\footnote{The measure $\mu_n$ is uniquely determined if we also specify that its restriction to any component of $\mathbb{C}^*\setminus S_n'$ is a multiple of the surface measure of the Riemann sphere.} Then $\mu_n$ should converge weakly to a measure $\mu$ on $\mathbb{C}^*$ which is some version of the Liouville quantum gravity measure (possibly the ``$\gamma$-unit area quantum sphere measure with $\gamma=\sqrt{8/3}$", introduced in \cite{sheffield10conformal}). In particular, $\mu$ should satisfy a version of the KPZ dimensional scaling relation. 
\item We expect that the box-counting dimension of $S_{\infty}$ is a.s. well-defined and constant. More precisely, write $n_{\epsilon}$ for the number of balls of radius $\epsilon$ required to cover $S_{\infty}$. We expect that $\log n_{\epsilon}/\log (1/\epsilon) \to c$ almost surely, where $c$ is non-random.  Is this true? If so, what is $c$? Is $c > 1$? (Note that for the Hausdorff dimension, if $(C_n,n \in \mathbb{N})$ are measurable sets in $\mathbb{R}^2$ then $\mathrm{dim}_{\mathrm{Haus}}(\bigcup_{n \in \mathbb{N}} C_n) = \sup_{n \in \mathbb{N}} \mathrm{dim}_{\mathrm{Haus}}(C_n)$. Since $S_{\infty}$ is a countable union of line segments, it follows that $\mathrm{dim}_{\mathrm{Haus}}(S_{\infty}) = 1$ almost surely.)
\item Let $Z$ be the a.s.\ unique accumulation point of $S_{\infty}$. Can the law of $Z$ be explicitly described?
\item Write $G_{\infty}(\epsilon)$ for the graph induced by those vertices for which all incident squares are disjoint from $B(Z,\epsilon)$. How quickly does $G_{\infty}(\epsilon)$ grow as $\epsilon$ decreases? Relatedly, how does the diameter of $G_{\infty}(\epsilon)$ grow? Existing results about random maps suggest that if the diameter grows as $\epsilon^{-\alpha}$ then the volume should grow as $\epsilon^{-4\alpha}$. 
\item The structure of $S_{\infty}$ near $Z$ should be independent of its structure near the root; here is one question along these lines. Reroot $G_{\infty}$ by taking one step along a random walk path from the root, write $\hat{S}_{\infty}$ for the resulting squaring and $\hat{Z}$ for its point of accumulation. Then recenter $S_{\infty}$ and $\hat{S}_{\infty}$ so that $Z$ and $\hat{Z}$ sit at the origin. Does $\epsilon^{-1}d_H(S_{\infty}\cap B(0,\epsilon),\hat{S}_{\infty}\cap B(0,\epsilon)) \to 0$ almost surely, as $\epsilon \to 0$? Here $d_H$ denotes Hausdorff distance. 
\item Let $e_n(1),\ldots,e_n(k)$ be independent, uniformly random oriented edges of the contacts graph $R(S_n)$, and for $1 \le i\le k$ let $r_n(i)$ be the ratio of the side length of the ``tail square" of $e_n(i)$ to that of its ``head square". The vector $(r_n(i),1\le i \le k)$ should converge in distribution to a limit $(r(i),1 \le i \le k)$, whose entries are iid. This would be a very small first step towards establishing that the random squaring in some sense ``looks like the exponential of a Gaussian free field". 
\item Let $A_n$ be the adjacency matrix of $G_n$. The areas of squares may be calculated as determinants of minors of $A_n$. However, these determinants grow very quickly, and even finding logarithmic asymptotics seems challenging. A simpler, still challenging project is to study the determinant of any principal minor of $A_n$ or, equivalently, to study the number of spanning trees of $G_n$. 
\item The height of $S_{\infty}$ is $1$ but its width $W_{\infty}$ is random, and by considering the graph structure near the root of $G_{\infty}$ it is not hard to see that $W_{\infty}$ is an honest random variable (rather than a.s.\ constant) On the other hand, duality implies that $W_{\infty}$ and $1/W_{\infty}$ have the same law. Can anything explicit be said about this law? In particular, is $\mathbb{P}(W_{\infty}=1)>0$?
\item Simulations suggest that for $n$ large, $S_{n}$ is unlikely to contain four squares with common intersection. Does this probability indeed tend to zero as $n$ becomes large? This question looks innocent. However, recall that such intersections are the reason the function sending a rooted planar graph to its squaring is non-invertible. A positive answer would constitute substantial progress towards proving an asymptotic formula, conjectured by Tutte \citep[Section 9]{tutte63census}, for the number of perfect squarings with $n$ squares. 
\item Let $\hat{S}_n$ be uniformly distributed over squarings of a rectangle with $n$ squares. Does $\hat{S}_n$ converge in distribution to $S_{\infty}$ for the Hausdorff distance? This follows if the laws of $S_n$ and $\hat{S}_n$ are close, which would itself follow from a positive answer to the previous question. 
\item The behaviour of the simple random walk on $G_{\infty}$ is also of interest. How do quantities such as $\mathbb{P}(X_t=X_0)$, $d_{G_{\infty}}(X_0,X_t)$, and $\#\{X_s,0 \le s \le t\}$ scale in $t$? 
\item It seems likely that $R(S_{\infty})$ is recurrent; is it? Here is one tempting argument for recurrence; its incorrectness was pointed out to us by Ori Gurel-Gurevich. By Lemma~\ref{lem:derived_sub}, $R(S_{\infty})$ may be viewed as a subgraph of $D_{\infty}^2$. Since $D_{\infty}$ is recurrent, so is $D_{\infty}^2$; then conclude via Rayleigh monotonicity. The problem with the argument is that the recurrence of $D_{\infty}$ is not known to imply the recurrence of $D_{\infty}^2$ (this implication would be true if $D_{\infty}$ had uniformly bounded degrees \citep[Theorem 2.16]{LyPe}). Perhaps if $G$ is a recurrent, unimodular random graph whose root degree has exponential tail, then any finite power of $G$ is also recurrent; this would be an interesting fact in its own right. 
\end{enumerate}

\subsection*{Acknowledgements}
Both authors thank Gr\'egory Miermont for useful comments, and Ori Gurel-Gurevich for pointing out an error in an earlier version of this work. LAB thanks Nicolas Curien and Omer Angel for their insightful remarks subsequent to a presentation of this work at the McGill Bellairs Research Institute, in April 2014.


\begin{thebibliography}{27}
\providecommand{\natexlab}[1]{#1}
\providecommand{\url}[1]{\texttt{#1}}
\expandafter\ifx\csname urlstyle\endcsname\relax
  \providecommand{\doi}[1]{doi: #1}\else
  \providecommand{\doi}{doi: \begingroup \urlstyle{rm}\Url}\fi

\bibitem[Addario-Berry(2014)]{ab14hex}
Louigi Addario-Berry.
\newblock Growing random $3$-connected maps, or comment s'enfuir de l'hexagone.
\newblock arXiv:1402.2632 [math.PR], 2014.

\bibitem[Aldous and Steele(2004)]{aldous04objective}
David Aldous and J.~Michael Steele.
\newblock The objective method: probabilistic combinatorial optimization and
  local weak convergence.
\newblock In \emph{Probability on discrete structures}, volume 110 of
  \emph{Encyclopaedia Math. Sci.}, pages 1--72. Springer, 2004.
\newblock URL \url{http://www.stat.berkeley.edu/~aldous/Papers/me101.pdf}.

\bibitem[Angel and Schramm(2003)]{angel2003uipt}
Omer Angel and Oded Schramm.
\newblock Uniform infinite planar triangulations.
\newblock \emph{Comm. Math. Phys.}, 241\penalty0 (2-3):\penalty0 191--213,
  2003.
\newblock URL \url{http://arxiv.org/abs/math/0207153}.

\bibitem[Bender and Canfield(1989)]{bender89face}
Edward~A. Bender and E.~Rodney Canfield.
\newblock Face sizes of {$3$}-polytopes.
\newblock \emph{J. Combin. Theory Ser. B}, 46\penalty0 (1):\penalty0 58--65,
  1989.
\newblock URL \url{http://dx.doi.org/10.1016/0095-8956(89)90007-5}.

\bibitem[Benjamini and Schramm(1996)]{benjamini96rw}
Itai Benjamini and Oded Schramm.
\newblock Random walks and harmonic functions on infinite planar graphs using
  square tilings.
\newblock \emph{Ann. Probab.}, 24\penalty0 (3):\penalty0 1219--1238, 1996.
\newblock URL \url{http://projecteuclid.org/euclid.aop/1065725179}.

\bibitem[Benjamini and Schramm(2001)]{benjamini01recurrence}
Itai Benjamini and Oded Schramm.
\newblock Recurrence of distributional limits of finite planar graphs.
\newblock \emph{Electron. J. Probab.}, 6:\penalty0 no. 23, 13 pp. (electronic),
  2001.
\newblock URL \url{http://arxiv.org/abs/math/0011019}.

\bibitem[Brooks et~al.(1940)Brooks, Smith, Stone, and
  Tutte]{brooks40dissection}
R.~L. Brooks, C.~A.~B. Smith, A.~H. Stone, and W.~T. Tutte.
\newblock The dissection of rectangles into squares.
\newblock \emph{Duke Math. J.}, 7:\penalty0 312--340, 1940.

\bibitem[Doyle and Snell(1984)]{doyle84rw}
Peter~G. Doyle and J.~Laurie Snell.
\newblock \emph{Random walks and electric networks}, volume~22 of \emph{Carus
  Mathematical Monographs}.
\newblock Mathematical Association of America, Washington, DC, 1984.
\newblock URL \url{http://arxiv.org/abs/math/0001057}.

\bibitem[Duffin(1962)]{duffin62extremal}
R.~J. Duffin.
\newblock The extremal length of a network.
\newblock \emph{J. Math. Anal. Appl.}, 5:\penalty0 200--215, 1962.
\newblock ISSN 0022-247x.

\bibitem[Duplantier and Sheffield(2011)]{duplantier2011liouville}
Bertrand Duplantier and Scott Sheffield.
\newblock Liouville quantum gravity and {KPZ}.
\newblock \emph{Inventiones Mathematicae}, 185:\penalty0 333--393, 2011.
\newblock URL \url{http://arxiv.org/abs/0808.1560}.

\bibitem[Fusy et~al.(2008)Fusy, Poulalhon, and Schaeffer]{fusy08dissection}
\'{E}ric Fusy, Dominique Poulalhon, and Gilles Schaeffer.
\newblock Dissections, orientations, and trees with applications to optimal
  mesh encoding and random sampling.
\newblock \emph{ACM Trans. Algorithms}, 4\penalty0 (2):\penalty0 19:1--19:48,
  2008.
\newblock URL \url{http://arxiv.org/abs/0810.2608}.

\bibitem[Gurel-Gurevich and Nachmias(2013)]{gg13recurrence}
Ori Gurel-Gurevich and Asaf Nachmias.
\newblock Recurrence of planar graph limits.
\newblock \emph{Ann. of Math. (2)}, 177\penalty0 (2):\penalty0 761--781, 2013.
\newblock URL \url{http://arxiv.org/abs/1206.0707}.

\bibitem[He and Schramm(1995)]{he95hyperbolic}
Zheng-Xu He and O.~Schramm.
\newblock Hyperbolic and parabolic packings.
\newblock \emph{Discrete Comput. Geom.}, 14\penalty0 (2):\penalty0 123--149,
  1995.

\bibitem[Krikun(2005)]{krikun05local}
Maxim Krikun.
\newblock Local structure of random quadrangulations.
\newblock arXiv:math/0512304 [math.PR], 2005.

\bibitem[Le~Gall(2007)]{legall2007topological}
Jean-Fran{\c{c}}ois Le~Gall.
\newblock The topological structure of scaling limits of large planar maps.
\newblock \emph{Invent. Math.}, 169\penalty0 (3):\penalty0 621--670, 2007.

\bibitem[Le~Gall(2013)]{legall2013brownianmap}
Jean-Francois Le~Gall.
\newblock Uniqueness and universality of the {B}rownian map.
\newblock \emph{Ann. Probab.}, 41:\penalty0 2880--2960, 2013.
\newblock URL \url{http://arxiv.org/abs/1105.4842}.

\bibitem[Le~Gall(2014)]{legall14rg}
Jean-Fran{\c{c}}ois Le~Gall.
\newblock Random geometry on the sphere.
\newblock To appear in the Proceedings of ICM 2014, Seoul, 2014.
\newblock URL \url{http://arxiv.org/pdf/1403.7943v1.pdf}.

\bibitem[Luczak and Winkler(2004)]{luczak04building}
Malwina Luczak and Peter Winkler.
\newblock Building uniformly random subtrees.
\newblock \emph{Random Structures Algorithms}, 24\penalty0 (4):\penalty0
  420--443, 2004.
\newblock URL \url{http://www.math.dartmouth.edu/~pw/papers/birds.ps}.

\bibitem[Lyons and Peres(2005)]{LyPe}
Russell Lyons and Yuval Peres.
\newblock Probability on trees and networks.
\newblock Book in preparation, available via\\ {\tt
  http://mypage.iu.edu/$\sim$rdlyons/prbtree/prbtree.html}, 2005.

\bibitem[Miermont(2013)]{miermont2013brownianmap}
G.~Miermont.
\newblock The {B}rownian map is the scaling limit of uniform random plane
  quadrangulations.
\newblock \emph{Acta Mathematica}, 210\penalty0 (2):\penalty0 319--401, 2013.
\newblock URL \url{http://arxiv.org/abs/1104.1606}.

\bibitem[Miermont(2008)]{miermont2008sphericity}
Gr{{\'e}}gory Miermont.
\newblock On the sphericity of scaling limits of random planar
  quadrangulations.
\newblock \emph{Electron. Commun. Probab.}, 13:\penalty0 248--257, 2008.

\bibitem[Miller and Sheffield(2014)]{miller14qle}
Jason Miller and Scott Sheffield.
\newblock Quantum loewner evolution.
\newblock arXiv:1312.5745 [math.PR], 2014.

\bibitem[Norris(1998)]{norris98markov}
J.~R. Norris.
\newblock \emph{Markov chains}, volume~2 of \emph{Cambridge Series in
  Statistical and Probabilistic Mathematics}.
\newblock Cambridge University Press, Cambridge, 1998.

\bibitem[Schramm(1993)]{schramm93square}
Oded Schramm.
\newblock Square tilings with prescribed combinatorics.
\newblock \emph{Israel J. Math.}, 84\penalty0 (1-2):\penalty0 97--118, 1993.

\bibitem[Schramm(2007)]{schramm2007icm}
Oded Schramm.
\newblock Conformally invariant scaling limits: an overview and a collection of
  problems.
\newblock In \emph{International {C}ongress of {M}athematicians. {V}ol. {I}},
  pages 513--543. Eur. Math. Soc., Z{\"u}rich, 2007.
\newblock URL \url{http://arxiv.org/abs/math/0602151}.

\bibitem[Sheffield(2010)]{sheffield10conformal}
Scott Sheffield.
\newblock Conformal weldings of random surfaces: {SLE} and the quantum gravity
  zipper.
\newblock arXiv:1012.4797 [math.PR], 2010.

\bibitem[Tutte(1963)]{tutte63census}
W.~T. Tutte.
\newblock A census of planar maps.
\newblock \emph{Canad. J. Math.}, 15:\penalty0 249--271, 1963.
\newblock URL \url{http://cms.math.ca/10.4153/CJM-1963-029-x}.

\end{thebibliography}
\end{document}